\newcommand{\absto}{\mathcal{O}}
\newcommand{\pow}{\mathsf{P}}
\newcommand{\univ}{\mathfrak{U}}
\newcommand{\uni}{\mathsf{U}}
\newcommand{\lc}{^>}
\newcommand{\uc}{^<}
\newtheorem{definition}{Definition}
\newtheorem{theorem}[definition]{Theorem}
\newtheorem{lemma}[definition]{Lemma}
\newtheorem{corollary}[definition]{Corollary}
\newcommand{\superimpose}[2]{{\ooalign{$#1\@firstoftwo#2$\cr\hfil$#1\@secondoftwo#2$\hfil\cr}}}
\DeclareMathOperator{\bigveeplus}{\mathchoice{\mathpalette\superimpose{{\bigvee}{\raisebox{6pt}{\tiny +}}}}{\mathpalette\superimpose{{\bigvee}{\raisebox{5pt}{\scalebox{.5}{+}}}}}{\mathpalette\superimpose{{\bigvee}{\raisebox{5pt}{\scalebox{.4}{+}}}}}{\mathpalette\superimpose{{\bigvee}{\raisebox{5pt}{\scalebox{.4}{+}}}}} }
\newcommand{\inoreq}{\mathrel{\text{\in@eq}}}
\newcommand{\in@eq}{%
	\oalign{%
		\hidewidth$\m@th\in$\hidewidth\cr
		\noalign{\nointerlineskip\kern1ex}%
		$\m@th\smash{-}$\cr
		\noalign{\nointerlineskip\kern-.5ex}%
	}%
}
\author{Zurab Janelidze and Ineke van der Berg}
\title{A Dedekind-style axiomatization and the corresponding universal property of an ordinal number system}
\date{}
\begin{document}
	\maketitle
	
	\begin{abstract}
	In this paper, we give an  axiomatization of the ordinal number system, in the style of Dedekind's axiomatization of the natural number system. The latter is based on a structure $(N,0,s)$ consisting of a set $N$, a distinguished element $0\in N$ and a function $s\colon N\to N$. The structure in our axiomatization is a triple $(O,L,s)$, where $O$ is a class, $L$ is a class function defined on all $s$-closed `subsets' of $O$, and $s$ is a class function $s\colon O\to O$. In fact, we develop the theory relative to a Grothendieck-style universe (minus the power set axiom), as a way of bringing the natural and the ordinal cases under one framework. We also establish a universal property for the ordinal number system, analogous to the well-known universal property for the natural number system. 
	\end{abstract}
	
	
	
	
	\section*{Introduction}
	
	The introduction and study of ordinal numbers goes back to the pioneering works of Cantor in set theory \cite{cantor1895beitrage,cantor1897beitrage}. 
	In modern language, Cantor's ordinal numbers are isomorphic classes of well-ordered sets, see e.g.~\cite{fraenkel1953abstract}. There is also a `concrete' definition of an ordinal number as a transitive set which is well-ordered under the element relation -- see e.g.~\cite{jech2003set}. Such sets are usually called \emph{von Neumann ordinals}. Natural numbers can be seen concretely as the finite ordinal numbers. 
	In Dedekind's approach to the natural number system described in \cite{dedekind1888sollen}, the natural numbers are not defined as concrete objects, but rather as abstract entities organized in a certain structure; namely, a triple $(N,0,s)$ consisting of a set $N$ (a set of `abstract' natural numbers), a distinguished element $0$ of $N$ (in \cite{dedekind1888sollen}, the distinguished element is 1), and a function $s\colon N\to N$, which names the `successor' of each natural number. The axioms that such a system should satisfy were formulated by Dedekind, but are often referred to as Peano axioms today (see e.g.~\cite{wang1957axiomatization} for some historical background):
	\begin{itemize}
	    \item $0$ does not belong to the image of $s$.
	    
	    \item $s$ is injective.
	    
	    \item $X=N$ for any subset $X$ of $N$ that is closed under $s$ and contains $0$.
	\end{itemize}
	It is an observation due to Lawvere \cite{lawvere1964elementary} that these axioms identify the natural number systems as initial objects in the category of all triples $(X,x,t)$ where $X$ is a set, $x\in X$ and $t$ is a function $t\colon X\to X$. This `universal property' of the natural number system, freed from its category-theoretic formulation, is actually the `definition by induction' theorem already contained in \cite{dedekind1888sollen}. A morphism $(N,0,s)\to(X,x,t)$ of such triples is defined as a function $f\colon N\to X$ such that:
	\begin{itemize}
	    \item $f(0)=x$,
	    \item $f(s(n))=t(f(n))$ for all $n\in N$.
	\end{itemize}
	The `definition by induction' theorem states that there is exactly one morphism to any other triple $(X,x,t)$ from the triple $(N,0,s)$ satisfying the axioms stated above. This theorem is of course well known because of its practical use: it says that recursively defined functions exist and are uniquely determined by the recursion. Intuitively, the theorem can be understood as follows. A triple $(X,x,t)$ can be viewed as an abstraction of the concept of counting -- $X$ is the set of figures used in counting, $x$ is where counting begins and the function $t$ names increments when counting. Without further restrictions on such `counting systems', there are many non-isomorphic ones, some of which are quite different from the natural number system, but still useful; for instance, hours on a clock, where counting loops back to $1$ once we pass $12$. A morphism of these triples can be viewed as a `translation' of one counting system to another. The universal property of the natural number system presents it as a `universal' counting system, in the sense that it has a unique translation to any other counting system. Incidentally, such intuition is not particular to the natural number system: many structures in mathematics can be defined by natural universal properties -- see \cite{mac2013categories}. 
	
	Ordinal numbers exhibit a similar structure to natural numbers -- there is a `starting' ordinal (the natural number $0$), and every ordinal number has a successor. The natural numbers $0,1,2,3,\ldots$ are the first ordinal numbers. This set is closed under succession. The ordinal number system allows for another type of succession that can be  applied to sets of ordinal numbers closed under succession, giving rise to the so-called `limit' ordinal numbers. The infinite sequence of natural numbers is succeeded by a limit ordinal number, usually denoted by $\omega$. Now, we can take the `usual' successor of $\omega$, call it $\omega+1$, and keep taking its successors until we get another set that is closed under succession, after which we introduce another limit ordinal number -- it will be $\omega+\omega=\omega\cdot 2$. The next limit ordinal number will be $\omega\cdot 3$. At some point, we reach $\omega\cdot\omega=\omega^2$, then $\omega^3$, and so on until we reach $\omega^\omega$, then $\omega^\omega+1$, and so on\ldots The process is supposed to continue until all ordinal numbers that we have named no longer form a set. 
	Let us also recall that von Neumann ordinals are defined as sets of preceding ordinals. Thus the first ordinal number, the number $0$, is defined as $0=\varnothing$ and the successor of an ordinal number $n$ is defined as $\uni\{n, \{n\}\}$. A limit ordinal number is one that is the union of all preceding ordinal numbers. We can, in particular, think of $0$ as a limit ordinal number given by the union of its predecessors, since the empty union equals the empty set. Equivalently, limit ordinal numbers are those whose sets of predecessors are closed under succession. 
	
	We may also think of the ordinal number system in terms of a triple $(O,L,s)$ -- this time, $O$ is a class (since the collection of all ordinal numbers is no longer a set), $L$ is a (class) function that specifies limit ordinals and is defined for those subclasses of $O$ which form sets closed under the class function $s$, which specifies the successor of each ordinal. In this paper we show that the following three axioms on such a triple are suitable as analogues of the three Dedekind-Peano axioms for the ordinal number system:
	\begin{itemize}
	    \item $L(I)$ does not belong to the image of $s$, and also $s(L(I))\notin I$, for any $I$ such that $L(I)$ is defined. 
	    
	    \item $s$ is injective and $L(I)=L(J)$ if and only if  $\overline{I}=\overline{J}$ and $L(I),L(J)$ are defined, where $\overline{I}$ and $\overline{J}$ denote closures of $I$ and $J$, respectively, under $s$ and $L$ predecessors.
	    
	    \item $X=O$ for any subclass $X$ of $O$ that is closed under $s$ and that contains $L(I)$ for each $I\subseteq X$ such that $L(I)$ is defined. 
	\end{itemize}	
	In particular, we prove that:
	\begin{itemize}
	    \item The system of von Neumann ordinals constitutes a triple $(O,L,s)$ satisfying the three axioms above. There is nothing surprising here, as the result relies on the well-known properties of ordinal numbers.
	    
	    \item Any triple $(O,L,s)$ satisfying these three axioms has an order which makes it order-isomorphic to the system of ordinal numbers. The order, in fact, is the specialization order of the topology given by the closure operator in the second axiom (without those axioms, this order is merely a preorder). 
	    
	    \item The triple  $(O,L,s)$ satisfying the three axioms above is an initial object in the category of all triples $(O',L',s')$ such that $\overline{I'}=\overline{J'}$ implies $L(I')=L(J')$, whenever those are defined (with the domain of $L'$ being the class of all $s'$-closed subsets of $O'$). 
	\end{itemize}
	
	The idea for defining an ordinal number system abstractly goes back to Zermelo (see `Seven notes on ordinal numbers and large cardinals' in \cite{zermelo2010ernst}). His approach is to define it as a particular type of well-ordered class $(O,\leqslant)$. The following axioms would suffice:
	\begin{itemize}
	    \item For every $x\in O$, the class $\{x\}\lc =\{y\in O\mid y<x\}$ is a set.
	    
	    \item For each subset $X$ of $O$, the class $X\uc =\{y\in O\mid \forall_{x\in X}[x<y]\}$ is non-empty.
	\end{itemize}
	There is, of course, an analogous presentation (although less known than the one given by Peano axioms) of the natural number system as a well-ordered set $(N,\leqslant)$  satisfying the following conditions:  
	\begin{itemize}
	    \item For each $x\in N$, the set $\{x\}\lc $ is finite.
	    
	    \item For each $x\in N$, the set $\{x\}\uc $ is non-empty.
	\end{itemize}
	This is an alternative approach to that of Dedekind, where there is greater emphasis on the order structure. The difference between our approach to ordinal numbers and the traditional approaches is similar, where in our approach we try to make minimal use of the order structure. Universal properties of the ordinal number system emerging from the more order-based approach have been established in \cite{joyal1995algebraic}. 
	Our universal property is different from those. 
	
	The main new results of the paper are given in the last two sections. Before that, we redevelop the basic theory of ordinal numbers relative to the set-theoretic context in which these results are proved, ensuring that the paper is self-contained.

	\section{The context}

    There are a number of alternatives for a context in which the theory that we lay down in this paper could be developed. Elaboration of those contexts and comparison of results across the contexts as a future development of our work would certainly be worthwhile. In this paper, we have decided to stick to what we believe to be technically the most simple and intuitive context, given by a `universe' inside the standard Zermelo-Fraenkel axiomatic set theory, including the axioms of foundation and choice (see e.g.~\cite{jech2003set}). Developing mathematics relative to a universe is typical in those subjects where sets of different sizes are needed. For instance, this is the approach followed in the exposition of category theory in \cite{mac2013categories}. The universes we work with, however, are slightly more general than the more commonly used Grothendieck universes \cite{bourbaki2006theorie,gabriel1953des,williams1969grothendieck}. The main difference is that our universes do not require closure under power sets and can be empty. Our context is in fact a particular instance of the quite general category-theoretic context used in \cite{joyal1995algebraic}. Generalization of our results to that context is left for future work.
    
    We remark that the definitions, theorems and their proofs contained in this paper could be adapted, after a straightforward modification to their formulation to `absolute' set theory, where our `sets' could be replaced with `classes' and elements of the fixed universe with `sets'. We would then get the form of the definitions and theorems given in the Introduction. 
    
    For a set $X$, by $\pow X$ we denote the power set of $X$, and by $\uni X$ we denote the union of all elements of $X$. By $\mathbb{N}$ we denote the set of natural numbers. While we do not rely on any prior knowledge of facts about ordinal numbers (proofs of all needed facts are included in the paper), we do rely on knowledge of basic set-theoretic properties of the natural number system. In particular, we will make use of mathematical induction, definition by recursion, as well as the fact that any infinite set has a subset bijective to $\mathbb{N}$.

    Recall that a set $X$ is said to be \emph{transitive} when $X\subseteq \pow X$, or equivalently, when $\uni X\subseteq X$. For a function $f\colon X\to Y$ and a set $A\in\pow X$, we write $fA$ to denote the image of $A$ under $f$:
    \begin{gather*}
        fA=\{f(a)\mid a\in A\}.
    \end{gather*}

	
	\begin{definition}\label{def:univ}
		A \emph{universe} is a set $\univ$ satisfying the following:
		\begin{itemize}
			\item[(U1)] $\univ$ is a transitive set.
			
			\item[(U2)] If $X,Y\in\univ$ then $\{X,Y\}\in\univ$. 
			
			\item[(U3)] $\uni fI \in \univ$ for any $I \in \univ$ and any function $f\colon I\to \univ$.
		\end{itemize}

	\end{definition}
	
These axioms imply that $\univ$ is closed under the following standard set-theoretic constructions: 
\begin{itemize}
    \item Singletons. Trivially, since $\{x\}=\{x,x\}$.
    
    \item Union. Because $\uni X=\uni 1_X X$.
    
    \item Subsets. Let $X\subseteq Y$. If $Y\in\univ$ and $X=\varnothing$, then $X\in\univ$ because $\univ$ is a transitive set (thanks to the axiom of foundation). If $X\neq \varnothing$ then let $x_0\in X$. Consider the function $f\colon Y\to \univ$ defined by
    \begin{gather*}
    f(y)=\begin{cases}
    \{y\}, & y\in X,\\
    \{x_0\}, & y\notin X,
    \end{cases}
    \end{gather*}
    Then $X=\uni fY$ and so $X\in\univ$.
    
    \item Cartesian products (binary). Let $X\in\univ$ and $Y\in\univ$. Then \begin{gather*}\{(x,y)\}=\{\{\{x,y\},\{x\}\}\}\in\univ\end{gather*} for each $x\in X$ and $y\in Y$. For each $x\in X$ define a function $f_x\colon Y\to \univ$ by $f_x(y)=\{(x,y)\}$.
    Then 
    $\{(x,y)\mid y\in Y\}=\uni f_x Y\in\univ
    $
    for each $x\in X$. Now define a function $g\colon X\to \univ$ by $g(x)=\{\{(x,y)\mid y\in Y\}\}$. Then, $X\times Y=\uni g X\in\univ$.
    
    \item Disjoint union. Given $X\in\univ$, the disjoint union $\Sigma X$ can be defined as \begin{gather*}\Sigma X=\uni\{x\times\{x\}\mid x\in X\}.\end{gather*} Then $\Sigma X=\uni fX$ where $f\colon X\to \univ$ is defined by $f(x)=x\times\{x\}$.
    
    \item Quotient sets. Let $X\in\univ$ and let $E$ be an equivalence relation on $X$. Then $X/E=\uni qX$, where $q$ is the function $q\colon X\to \univ$ defined by $q(x)=\{[x]_E\}$. 
\end{itemize}
From this it follows of course that when $\univ$ is not empty, it contains all natural numbers, assuming that they are defined by the recursion
\begin{align*}
    0 &=\varnothing,\\
    n+1 &=\uni\{n,\{n\}\}.
\end{align*}
Furthermore, when $\univ$ contains at least one infinite set, it also contains the set $\mathbb{N}$ of all natural numbers (as defined above).

The empty set $\varnothing$ is a universe. The sets whose transitive closure have cardinality less than a fixed infinite cardinal $\kappa$ form a universe in the sense of the definition above (by Lemma~6.4 in \cite{kunen1980set}). In particular, hereditarily finite sets form a universe, as do hereditarily countable sets. The so-called Grothendieck universes are exactly those universes in our sense, which are closed under power sets, i.e.\ if $X\in\univ$, then $\pow X\in \univ$. 
	
For any two sets $A$ and $B$, we write
\begin{gather*}
    A\approx B
\end{gather*}
when there is a bijection from $A$ to $B$. The \emph{restricted power set} of a set $X$ relative to a universe $\univ$ is the set
\begin{gather*}
	\pow_\univ X = \{A\subseteq  X\mid \exists_{B\in \univ}[A\approx B]\}.
\end{gather*} 
The following lemmas will be useful.		

\begin{lemma}\label{lem:fini}
       When $\univ$ is non-empty, for any set $X$ and its finite subset $Y\subseteq X$, we have $Y\in \pow_\univ X$.
\end{lemma}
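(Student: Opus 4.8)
The plan is to exploit the definition of finiteness together with the observation, already recorded after Definition~\ref{def:univ}, that a non-empty universe contains every natural number. Recall that $\pow_\univ X$ consists of those subsets $A\subseteq X$ for which there exists some $B\in\univ$ with $A\approx B$; so it suffices to produce, for the given finite $Y$, a single element of $\univ$ that is in bijection with $Y$.

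First I would invoke the standard set-theoretic meaning of a finite set: $Y$ is finite precisely when $Y\approx n$ for some natural number $n\in\mathbb{N}$, where $n$ is identified, as in the recursion $0=\varnothing$, $n+1=\uni\{n,\{n\}\}$ above, with the von Neumann ordinal $\{0,1,\dots,n-1\}$. This fixes a particular $n$ together with a bijection from $Y$ onto $n$.

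Next I would use the hypothesis $\univ\neq\varnothing$. As observed in the discussion following Definition~\ref{def:univ}, a non-empty universe contains all natural numbers, since each $n$ is constructed from $\varnothing$ using only the formation of singletons and binary unions, operations under which $\univ$ is closed by (U1)--(U3). Hence $n\in\univ$, and taking $B=n$ witnesses $Y\approx B$ with $B\in\univ$, so that $Y\in\pow_\univ X$ by the definition of the restricted power set.

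There is essentially no obstacle here; the lemma is a direct consequence of the earlier material. The only points that warrant care are that the non-emptiness hypothesis is precisely what is needed, as it is exactly what guarantees $n\in\univ$ (for the empty universe the conclusion would fail for non-empty $Y$), and that \emph{finite} is read as \emph{bijective to a natural number}, which is the standard convention in the present ZF setting.
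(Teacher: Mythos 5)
Your argument is correct and is essentially the paper's own proof: the paper simply notes that a non-empty universe contains a set of each finite size, and you make this explicit by taking that set to be the von Neumann natural number $n$ with $Y\approx n$, using the observation after Definition~\ref{def:univ} that $n\in\univ$. Nothing further is needed.
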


\begin{proof}
This follows from the fact that when $\univ$ is non-empty, it contains a set of each finite size. 
\end{proof}		

\begin{lemma}\label{lem:subs}
If $A\subseteq B$ and $B\in \pow_\univ X$, then $A\in \pow_\univ X$.
\end{lemma}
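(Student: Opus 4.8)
The plan is to transport a witnessing bijection along the inclusion $A\subseteq B$. By definition, the hypothesis $B\in\pow_\univ X$ unpacks into two pieces of data: first, $B\subseteq X$; and second, the existence of some set $C\in\univ$ together with a bijection $f\colon B\to C$. Since $A\subseteq B\subseteq X$ gives $A\subseteq X$ immediately, the only real content of the lemma is to produce a witness in $\univ$ that is bijective to $A$.

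First I would take the witnessing bijection $f\colon B\to C$ with $C\in\univ$ and simply restrict it to $A$. The restriction $f|_A\colon A\to fA$ is again a bijection onto its image (injectivity is inherited from $f$, and surjectivity onto $fA$ is automatic), so $A\approx fA$. Next I would observe that $fA\subseteq C$, and invoke the closure of $\univ$ under subsets established in the bulleted discussion immediately after Definition~\ref{def:univ}: from $C\in\univ$ and $fA\subseteq C$ we conclude $fA\in\univ$. Combining $A\approx fA$ with $fA\in\univ$ yields a member of $\univ$ bijective to $A$, which is exactly the condition needed for $A\in\pow_\univ X$.

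I do not anticipate a genuine obstacle, as the argument is essentially a one-line transport of structure across a bijection. The only point worth flagging is the degenerate case $\univ=\varnothing$: then $\pow_\univ X=\varnothing$, so the hypothesis $B\in\pow_\univ X$ can never be satisfied and the implication holds vacuously, requiring no separate treatment. In fact the subset-closure step already absorbs this edge case, since it too is vacuous when no $C\in\univ$ exists.
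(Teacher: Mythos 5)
Your proof is correct and follows essentially the same route as the paper: the paper's one-line argument (``$A$ is bijective to a subset of $C$'') is exactly your restriction of the witnessing bijection to $A$, combined with the closure of $\univ$ under subsets established after Definition~\ref{def:univ}. You have merely made the implicit steps explicit, including the harmless vacuous case $\univ=\varnothing$.
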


\begin{proof}
If $A\subseteq B\approx C\in\univ$, then $A$ is bijective to a subset of $C$. 
\end{proof}

\begin{lemma}\label{lem:unio}
If $A,B\in \pow_\univ X$ then $\uni\{A,B\}\in \pow_\univ X$. Also, if $I\in \pow_\univ X$, then for any function $f\colon I\to \pow_\univ X$, we have: $\uni fI\in \pow_\univ X$. 
\end{lemma}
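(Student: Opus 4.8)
The plan is to treat both claims by the same device: embed the given union injectively into a \emph{disjoint} union, build that disjoint union inside $\univ$ out of the closure properties already derived (pairs, singletons, binary Cartesian products, and above all axiom (U3)), and then use that $\univ$ is closed under subsets to land back in $\pow_\univ X$. Throughout, the containment in $X$ is immediate, since a union of subsets of $X$ is again a subset of $X$; the only content is producing the required bijection with an element of $\univ$.

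For the first claim, I would fix bijections $A\approx A'$ and $B\approx B'$ with $A',B'\in\univ$. Closure under pairs gives $\{A',B'\}\in\univ$, whence the disjoint union $\Sigma\{A',B'\}=\uni\{A'\times\{A'\},\,B'\times\{B'\}\}$ lies in $\univ$ by the computation already recorded for disjoint unions. The map sending each $x\in A\cup B$ to its designated copy --- in $A'$ if $x\in A$, and otherwise in $B'$ --- is an injection of $A\cup B=\uni\{A,B\}$ into $\Sigma\{A',B'\}$. Its image is a subset of an element of $\univ$, hence itself lies in $\univ$ by closure under subsets, and since it is bijective to $\uni\{A,B\}$ we conclude $\uni\{A,B\}\in\pow_\univ X$.

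For the second claim I would run the same argument over a $\univ$-sized index. Fix a bijection $\phi\colon I'\to I$ with $I'\in\univ$, and, using the axiom of choice, pick for each $i\in I'$ a set $g_i\in\univ$ together with a bijection $\psi_i\colon g_i\to f(\phi(i))$. Define $h\colon I'\to\univ$ by $h(i)=g_i\times\{i\}$; this is well-typed because $i\in\univ$ by transitivity of $\univ$, so $g_i\times\{i\}\in\univ$ by closure under singletons and products. Axiom (U3) then yields $\uni hI'\in\univ$, and this set is the disjoint union of the $g_i$. Choosing, for each $x\in\uni fI$, a single index $i(x)\in I'$ with $x\in f(\phi(i(x)))$, the assignment $x\mapsto(\psi_{i(x)}^{-1}(x),\,i(x))$ injects $\uni fI$ into $\uni hI'\in\univ$; as before its image lies in $\univ$, giving $\uni fI\in\pow_\univ X$.

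The main obstacle is bookkeeping rather than conceptual difficulty. Because a single point may lie in several of the sets $f(a)$, the embedding into the disjoint union is not canonical, so the argument depends on the axiom of choice to select both the family of bijections $\psi_i$ and the indices $i(x)$. The one step requiring genuine care is the verification that $x\mapsto(\psi_{i(x)}^{-1}(x),i(x))$ is injective: equality of the second coordinates forces a common index, after which injectivity of the corresponding $\psi_i$ forces the points to coincide. Everything else is an application of the closure properties of $\univ$ established immediately after Definition~\ref{def:univ}.
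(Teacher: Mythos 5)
Your proof is correct and follows essentially the same route as the paper's: both reduce each claim to the disjoint-union construction inside $\univ$ built from (U3) and the pairing/product closures. The only difference is cosmetic --- the paper realizes $\uni\{A,B\}$ and $\uni fI$ as \emph{quotients} of the disjoint union (using closure under quotient sets), whereas you inject them into it by choosing representative indices and invoke closure under subsets; both devices are available and your explicit appeal to choice is no more than what the paper's proof implicitly uses when selecting the bijections $f(x)\approx g(x)$.
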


\begin{proof}
Let $A\approx A'$ and $B\approx B'$ where $A',B'\in\univ$. Then $\uni\{A,B\}$ is bijective to a suitable quotient set of a disjoint union of $A'$ and $B'$. Next, to prove the second part of the lemma, suppose $I\approx J\in\univ$ and for each $x\in I$, suppose $f(x)\approx g(x)\in\univ$. Let $h$ denote a bijection $h\colon J\to I$. Define a function $f'\colon J\to\univ$ by $f'(x)=\{gh(x)\}$. Then $fI\approx \uni f'J$. The union $\uni fI$ will then be bijective to a suitable disjoint union of $\uni f'J$. 
\end{proof}
	

\begin{lemma}\label{lem:imag}
Given a function $f\colon X\to Y$,
	\begin{gather*}
	A\in\pow_\univ X\quad\Rightarrow\quad fA\in\pow_\univ Y.
	\end{gather*}
\end{lemma}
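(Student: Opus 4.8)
The plan is to reduce the statement to the closure properties of a universe already established in the excerpt, since the essential content of the lemma is just that the image of a ``small'' set is small. Because $A\in\pow_\univ X$, there is a set $B\in\univ$ together with a bijection $\beta\colon A\to B$; and $fA\subseteq Y$ holds trivially. Hence the only thing to verify is that $fA$ is bijective to some element of $\univ$.

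First I would observe that, up to cardinality, $fA$ is no larger than $A$: the corestriction of $f$ to its image is a surjection $f|_A\colon A\to fA$. To turn this into a bijection with a universe element, I would pass through a section. By the axiom of choice, $f|_A$ admits a right inverse $\sigma\colon fA\to A$, which is therefore injective, so that $\beta\circ\sigma\colon fA\to B$ is an injection. Consequently $fA$ is bijective to the subset $(\beta\circ\sigma)(fA)\subseteq B$.

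Then I would invoke the closure of $\univ$ under subsets, which was deduced earlier in the excerpt from the axioms (U1)--(U3): since $B\in\univ$, the subset $(\beta\circ\sigma)(fA)$ also lies in $\univ$, and hence $fA$ is bijective to an element of $\univ$. This is precisely the condition defining $fA\in\pow_\univ Y$.

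Alternatively, to avoid an explicit appeal to choice, I could use quotients instead of sections, mirroring the style of the proof of Lemma~\ref{lem:unio}: the kernel of $f|_A$ is an equivalence relation $E$ on $A$ with $A/E\approx fA$, and transporting $E$ along $\beta$ gives an equivalence relation $E'$ on $B$ with $B/E'\approx A/E$; closure of $\univ$ under quotient sets then yields $B/E'\in\univ$, whence $fA\approx B/E'\in\univ$. I do not expect any genuine obstacle here; the only point requiring care is to route the argument through the closure properties already proved, rather than assuming that $A$ or $fA$ themselves belong to $\univ$.
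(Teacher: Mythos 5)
Your proposal is correct; your second (quotient-based) variant is precisely the paper's own one-line proof, which observes that $fA$ is bijective to a suitable quotient of the set $B\in\univ$ and appeals to the closure of $\univ$ under quotient sets. Your primary route via a choice-provided section and closure under subsets is an equally valid minor variation, and since the paper works in ZFC the appeal to choice is unproblematic.
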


\begin{proof}
If $A\approx A'\in\univ$, then $fA$ is bijective to a suitable quotient of $A'$.
\end{proof}

\begin{lemma}\label{lem:produ}
If $A\in\pow_\univ X$ and $B\in\pow_\univ Y$, then $A\times B\in\pow_\univ (X\times Y)$.
\end{lemma}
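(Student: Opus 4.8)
The plan is to reduce the statement to the closure of $\univ$ under binary Cartesian products, which was already established immediately after Definition~\ref{def:univ}. Unfolding the definition of $\pow_\univ$, the hypotheses $A\in\pow_\univ X$ and $B\in\pow_\univ Y$ give subsets $A\subseteq X$ and $B\subseteq Y$ together with sets $A',B'\in\univ$ and bijections $A\approx A'$ and $B\approx B'$. To conclude $A\times B\in\pow_\univ(X\times Y)$, I must exhibit a single set $C\in\univ$ with $A\times B\approx C$, and note that $A\times B$ is a subset of $X\times Y$.

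First I would fix bijections $\alpha\colon A\to A'$ and $\beta\colon B\to B'$. The natural candidate for $C$ is $A'\times B'$, with the coordinatewise map $(a,b)\mapsto(\alpha(a),\beta(b))$ as the intended bijection $A\times B\to A'\times B'$; its inverse is $(a',b')\mapsto(\alpha^{-1}(a'),\beta^{-1}(b'))$, so this map is indeed a bijection and $A\times B\approx A'\times B'$. Next I would invoke closure of $\univ$ under binary products: since $A',B'\in\univ$, we get $A'\times B'\in\univ$. Together these witness $A\times B\approx A'\times B'\in\univ$, and since $A\subseteq X$ and $B\subseteq Y$ trivially yield $A\times B\subseteq X\times Y$, the defining condition of $\pow_\univ(X\times Y)$ is met.

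There is no substantial obstacle here; the only content is transporting the two bijections through the product construction and appealing to closure under products. The degenerate case $\univ=\varnothing$ requires no separate treatment, since then $\pow_\univ X$ is empty for every $X$ and the hypothesis is vacuous.
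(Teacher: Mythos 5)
Your proof is correct and follows the same route as the paper, which simply cites closure of $\univ$ under binary Cartesian products; you have merely spelled out the transport of the two bijections through the product, which is exactly what that one-line citation intends. No gaps.
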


\begin{proof}
This follows from the fact that $\univ$ is closed under cartesian products.
\end{proof}
	
\begin{lemma}\label{lem:com} $\pow_\univ \univ\subseteq\univ$. 
	\end{lemma}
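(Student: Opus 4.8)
The plan is to unfold the definition of $\pow_\univ\univ$ and realize each of its members directly as a set of the form $\uni fI$ covered by axiom (U3). Concretely, I would begin with an arbitrary $A\in\pow_\univ\univ$. By definition this means $A\subseteq\univ$ together with a bijection $\phi\colon B\to A$ for some $B\in\univ$. The inclusion $A\subseteq\univ$ is exactly what drives the argument: it guarantees that every element $a\in A$ is itself a member of $\univ$, and hence, since $\univ$ is closed under singletons (as already observed from (U2)), each singleton $\{a\}$ with $a\in A$ lies in $\univ$.

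Next I would use the bijection to manufacture the function required by (U3). The crucial point is the choice of index set: I must \emph{not} index the union by $A$ itself, since that would presuppose $A\in\univ$, which is precisely what is to be proved. Instead I index by the already-known member $B\in\univ$. Define $f\colon B\to\univ$ by $f(b)=\{\phi(b)\}$; this is well-defined into $\univ$ by the previous paragraph, and its domain $B$ lies in $\univ$ by hypothesis. Axiom (U3) then applies verbatim and yields $\uni fB\in\univ$.

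Finally I would carry out the elementary computation of the union. Since $fB=\{\{\phi(b)\}\mid b\in B\}$ is a set of singletons and $\phi$ is a bijection onto $A$, the union of these singletons is exactly the image of $\phi$, namely $A$. Thus $\uni fB=A$, so $A\in\univ$; and as $A$ was an arbitrary element of $\pow_\univ\univ$, we conclude $\pow_\univ\univ\subseteq\univ$.

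I do not anticipate a genuine obstacle here: the statement is essentially the closure-under-subsets construction from the list preceding the lemma, transported along a bijection to a bijective copy inside $\univ$. The only place that demands care is the one flagged above — taking the index set of the (U3)-union to be the copy $B$ rather than $A$ — so that the reasoning does not silently become circular.
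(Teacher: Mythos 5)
Your argument is correct and is essentially identical to the paper's own proof: both take a bijection from some $B\in\univ$ onto $A$, send each $b$ to the singleton $\{\phi(b)\}$, and apply (U3) with index set $B$ to realise $A$ as $\uni fB$. The point you flag about indexing by $B$ rather than $A$ is exactly the (implicit) crux of the paper's one-line proof.
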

	
	\begin{proof} Let $A\approx B$, $A\subseteq\univ$ and $B\in\univ$. Write $f$ for a bijection $f\colon B\to A$. Consider the function $g\colon B\to\univ$ defined by $g(b)=\{f(b)\}$. Then $A=\uni gB$ and so $A\in\univ$.
	\end{proof}

	\section{Abstract ordinals}
	
	In this section we introduce an abstract notion of an ordinal number system relative to a universe and establish its basic properties. Consider a partially ordered set $(X,\leqslant)$. The relation $<$ for the partial order, as a relation from $X$ to $X$, induces a Galois connection from $\pow X$ to itself given by the mappings
	\begin{align*}
	S\mapsto S\lc  &= \{x\in X\mid \forall_{y\in S} [x<y]\},\text{ and}\\
	S\mapsto S\uc  &= \{x\in X\mid \forall_{y\in S} [y<x]\}.
	\end{align*}
	We call $S\lc $ the \emph{lower complement} of $S$, and $S\uc $ the \emph{upper complement} of $S$. Note that by `$a<b$' above we mean `$a\leqslant b\land a\neq b$', as usual. Since the two mappings above form a Galois connection, we have
	\begin{gather*}
	    S\subseteq (S\lc )\uc  \textrm{ and } S\subseteq (S\uc )\lc 
	\end{gather*}
	for any $S\subseteq X$. We define the \emph{incremented join} $\bigveeplus S$ of a subset $S$ of $X$ (when it exists) as follows:
	\begin{gather*}
	\bigveeplus S = \min(S\uc ).
	\end{gather*}
	Note that since $S\cap S\uc =\varnothing$, the incremented join of $S$ is never an element of $S$. 
	It will be convenient to use the following abbreviations (where $x\in X$ and $S\subseteq X$):
	\begin{gather*}
	    x^+=\bigveeplus\{x \},\quad S^+=\{x^+\mid x\in S\}.
	\end{gather*}
	Note that for any $S\subseteq X$ such that $x^+$ exists for all $x\in S$, 
	\begin{align*}
	\bigveeplus S &= \min\{x \mid \forall_{y\in S}[ y < x ]\}\\
	&= \min\{x \mid \forall_{y\in S}[ y^+  \leqslant x ]\}\\
	&= \min\{x \mid \forall_{y\in S^+ } [y \leqslant x] \}\\
	&= \bigvee S^+ .\qedhere
	\end{align*}
	Furthermore, the following basic laws are self-evident:
	\begin{enumerate}[(i)]
		\item[(L1)] $x<x^+ $,
		\item[(L2)] there is no $z$ such that $x<z<x^+ $,
		\item[(L3)] $x<y \;\Leftrightarrow\; x^+ \leqslant y$,
		\item[(L4)] $x=\bigveeplus \{x\}\lc $ (for a total order),
		\item[(L5)] $x<y^+  \;\Leftrightarrow\;x\leqslant y $ (for a total order),
		\item[(L6)] $x^+ =y^+ \;\Leftrightarrow\; x=y$ (for a total order),
		\item[(L7)] $x<y \;\Leftrightarrow\; x^+ <y^+ $ (for a total order).
	\end{enumerate}
	
	The following (easy) lemma will also be useful:
	
	\begin{lemma}\label{lem:max}
		Let $(X,\leqslant)$ be a poset and let $S\subseteq X$. Then:
		\begin{enumerate}[(i)]
			\item If $S$ does not have a largest element, then $\bigvee S$ exists if and only if $\bigveeplus S$ exists, and when they exist they are equal, $\bigvee S=\bigveeplus S$. Conversely, if $\bigvee S=\bigveeplus S$ then $S$ does not have a largest element.  
			
			\item If $S$ has a largest element $x=\max S$, then $\bigveeplus S$ exists if and only if $x^+ $ exists, and when they exist, $\bigveeplus S=x^+$. Conversely, when $\leqslant$ is a total order, if $\bigveeplus S=x^+$ then $x=\max S$.
			
			\item If $\bigveeplus S\lc $ exists then $\bigveeplus S\lc =\min S$. Conversely, when $\leqslant$ is a total order, if $\min S$ exists, then $\min S=\bigveeplus S\lc $.
		\end{enumerate}
	\end{lemma}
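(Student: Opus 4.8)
The unifying idea is to study two subsets of $X$ determined by $S$: the set of upper bounds $U=\{x\in X\mid\forall_{y\in S}[y\leqslant x]\}$, whose least element is $\bigvee S$, and the upper complement $S\uc=\{x\in X\mid\forall_{y\in S}[y<x]\}$, whose least element is $\bigveeplus S$. Since $y<x$ entails $y\leqslant x$, we always have $S\uc\subseteq U$, and for (i) I would pin down the gap $U\setminus S\uc$. If $x\in U\setminus S\uc$, then some $y\in S$ has $y\leqslant x$ but $y\not<x$, forcing $y=x$; hence $x\in S$ is an upper bound, i.e.\ $x=\max S$. Thus $U\setminus S\uc$ equals $\{\max S\}$ if the largest element exists and is empty otherwise. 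When $S$ has no largest element we get $U=S\uc$, so $\bigvee S=\min U$ and $\bigveeplus S=\min S\uc$ exist together and coincide. For the converse, a largest element $m$ gives $\bigvee S=m\notin S\uc$, whereas $\bigveeplus S=\min S\uc$ lies in $S\uc$; so equality of the two would be impossible. No totality is used here.

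For (ii), with $x=\max S$ I would establish the identity $S\uc=\{x\}\uc$: the inclusion $S\uc\subseteq\{x\}\uc$ holds since $x\in S$, and conversely $x<z$ gives $y\leqslant x<z$, hence $y<z$, for every $y\in S$. Taking least elements yields $\bigveeplus S=\bigveeplus\{x\}=x^+$, with the two existing together, and again without totality. The converse does use totality: granting $\bigveeplus S=x^+$, if $x$ failed to be an upper bound of $S$ then some $y\in S$ would satisfy $x<y$, so $x^+\leqslant y$ by (L3), contradicting $y<\bigveeplus S=x^+$; and if $x\notin S$ then $y<x<x^+$ for all $y\in S$ would place $x$ in $S\uc$, contradicting $\bigveeplus S=x^+>x$. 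Hence $x$ is an upper bound lying in $S$, that is $x=\max S$.

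For (iii) I would exploit the Galois inclusion $S\subseteq(S\lc)\uc$ recorded before the lemma. Put $a=\bigveeplus S\lc=\min((S\lc)\uc)$ and suppose it exists. Because $S\subseteq(S\lc)\uc$, the element $a$ is a lower bound of $S$; and since $a\in(S\lc)\uc$ is a strict upper bound of $S\lc$, it cannot itself lie in $S\lc$. If $a\notin S$, then being a lower bound it satisfies $a<y$ for every $y\in S$, i.e.\ $a\in S\lc$, a contradiction; so $a\in S$ and therefore $a=\min S$. This half holds in any poset. The converse needs totality: for $m=\min S$, every $z\in S\lc$ has $z<m$, so $m\in(S\lc)\uc$; and if a strict upper bound $w$ of $S\lc$ had $w<m$, then $w<m\leqslant y$ for all $y\in S$ would give $w\in S\lc$ and hence $w<w$. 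Totality excludes $m\not\leqslant w$, so $m=\min((S\lc)\uc)=\bigveeplus S\lc$.

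The step I expect to require the most care is the membership claim $a\in S$ in (iii), where the argument turns on $a$ simultaneously being a lower bound of $S$ and a strict upper bound of $S\lc$; the rest is bookkeeping. Across all three parts the recurring subtlety is to state each biconditional as a genuine \emph{joint existence} of the two relevant minima rather than a mere equality assumed to hold, and to isolate precisely the places where totality is indispensable --- namely the converse directions of (ii) and (iii), where I convert a failure of $\leqslant$ into a strict inequality in the opposite direction.
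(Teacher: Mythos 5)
Your proof is correct and follows essentially the same route as the paper's: in each part you identify the relevant upper or lower complement set ($U$ versus $S\uc$, $S\uc=\{x\}\uc$, and $S\subseteq(S\lc)\uc$ with $\bigveeplus S\lc\notin S\lc$), compare minima, and invoke totality only in the converse directions of (ii) and (iii), exactly as the paper does. The explicit characterization of $U\setminus S\uc$ in (i) is a slight elaboration of the paper's one-line set equality, but not a different argument.
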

	
	\begin{proof}
		~\begin{enumerate}[(i)]
			\item If $S$ has no largest element, then 
			\begin{gather*}
			    \{x\in X\mid \forall _{y\in S}[y\leqslant x] \}=\{x\in X\mid \forall _{y\in S}[y<x] \}
			\end{gather*}
			and so 
			\begin{gather*}
			    \bigvee S=\min \{x\in X\mid \forall _{y\in S}[y\leqslant x] \}
			\end{gather*}
			exists if and only if 
			\begin{gather*}
			    \bigveeplus S=\min \{x\in X\mid \forall _{y\in S}[y<x] \}
			\end{gather*}
			exists, and when they exist, they are equal.
			Now suppose $\bigvee S=\bigveeplus S$. Since $\bigveeplus S$ is never an element of $S$, we conclude that $S$ does not have a largest element.
			\item Let $x=\max S$. Then $\{x\}\uc =S\uc $, and so $x^+=\min(\{x\}\uc )$ exists if and only if $\bigveeplus S=\min(S\uc )$ exists, and when they exist, they are equal.
			Now suppose, in the case of total order, that $\bigveeplus S= x^+ $ for some $x\in X$. Then $S$ can only have elements that are strictly smaller than $x^+ $, and thus each element of $S$ is less than or equal to $x$ (L5). Also, since $x^+  =\bigveeplus S= \min(S\uc )$, it does not hold that $x\in S\uc $, and thus $x$ is not strictly larger than every element of $S$. We can conclude that $x=\max(S)$.
			
		\item Suppose $\bigveeplus S\lc $ exists. Since $S\subseteq (S\lc )\uc $, we must have $\bigveeplus S\lc \leqslant x$ for each $x\in S$. This together with the fact that $\bigveeplus S\lc $ cannot be an element of $S\lc $ forces $\bigveeplus S\lc $ to be an element of $S$. Hence $\bigveeplus S\lc =\min S$. Suppose now that $\leqslant$ is a total order and $\min S$ exists. Consider an element $x\in (S\lc )\uc $. Then $x$ is not an element of $S\lc $ and so we cannot have $x<\min S$. Therefore, $\min S\leqslant x$. This proves $\min S=\bigveeplus S\lc $.   \qedhere
		\end{enumerate}
	\end{proof}
	
	
	\begin{definition}\label{def:aos}
		An \emph{ordinal system} relative to a universe $\univ$ is a partially ordered set $\absto=(\absto,\leqslant)$ satisfying the following axioms:
		\begin{itemize}
			\item[(O1)] For all $X\subseteq \absto$, if $X\neq\varnothing$, then $X\lc \in \pow_\univ \absto$.
			\item[(O2)] $\bigveeplus X$ exists for each $X\in \pow_\univ \absto$.
		\end{itemize}
		We refer to elements of $\absto$ as \emph{ordinals}.
	\end{definition}

	Note that if $\absto$ is non-empty, then Axiom (O1) forces the universe $\univ$ to be non-empty as well. So for any ordinal $x\in\absto$ we have $\{x\}\in\pow_\univ \absto$ (Lemma~\ref{lem:fini}). Note also that Axiom (O1) is equivalent to its weaker form (the equivalence does not require (O2) and relies on Lemma~\ref{lem:subs}):
	\begin{enumerate}
		\item[(O1${}'$)] $\{x\}\lc \in \pow_\univ \absto$ for all $x\in \absto$.
	\end{enumerate}	
	Axiom (O2) implies that the mapping $x\mapsto x^+$ is a function $ \absto\to \absto$.
	We call this function the \emph{successor function} of the ordinal system $\absto$. For each $x\in\absto$, an element of $\absto$ that has the form $x^+$ is called a \emph{successor ordinal} and the \emph{successor of $x$}. We call an ordinal that is not a successor ordinal a \emph{limit ordinal}. 
	
	Recall that a poset is a \emph{well-ordered set} when each of its nonempty subsets has smallest element.
	
	\begin{theorem}\label{thm:wel}\label{thm:wo}
		A poset $(\absto,\leqslant)$ is an ordinal system relative to a universe $\univ$ if and only if it is a well-ordered set (and consecutively, a totally ordered set) satisfying (O1${}'$) and such that $X\uc \neq\varnothing$ for all $X\in\pow_\univ(\absto)$.
	\end{theorem}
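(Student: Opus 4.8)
The plan is to lean on the equivalence of (O1) and (O1$'$) already noted above (which follows from Lemma~\ref{lem:subs}), so that throughout I may freely interchange the two. It then suffices to show that, under (O1$'$), axiom (O2) holds precisely when $\absto$ is well-ordered and $X\uc\neq\varnothing$ for every $X\in\pow_\univ\absto$. The fact that a well-ordered set is automatically totally ordered is standard (any pair $\{x,y\}$ has a least element), so the parenthetical clause requires no separate work.

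For the forward implication, I would assume (O1) and (O2). The condition $X\uc\neq\varnothing$ is immediate: for $X\in\pow_\univ\absto$, axiom (O2) provides $\bigveeplus X=\min(X\uc)$, and a set admitting a minimum cannot be empty. To obtain well-ordering, I would take an arbitrary nonempty $S\subseteq\absto$ and feed its lower complement through the two axioms: (O1) gives $S\lc\in\pow_\univ\absto$, whence (O2) guarantees that $\bigveeplus S\lc$ exists, and Lemma~\ref{lem:max}(iii) then identifies $\bigveeplus S\lc=\min S$. Thus every nonempty subset has a least element and $\absto$ is well-ordered.

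For the converse, I would assume $\absto$ is well-ordered, satisfies (O1$'$), and has $X\uc\neq\varnothing$ for all $X\in\pow_\univ\absto$. Axiom (O1) is recovered from (O1$'$) by the equivalence cited above. For (O2), given $X\in\pow_\univ\absto$, the set $X\uc$ is by hypothesis a nonempty subset of the well-ordered set $\absto$, hence has a least element; that least element is exactly $\bigveeplus X=\min(X\uc)$, so (O2) holds.

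The one point demanding care is the appeal to Lemma~\ref{lem:max}(iii) in the forward direction. Its converse half presupposes a total order, but the half I need---that existence of $\bigveeplus S\lc$ forces $\bigveeplus S\lc=\min S$---is valid in any poset. This is what lets me bootstrap well-ordering (and hence totality) directly from the axioms without circularity: the Galois machinery $S\mapsto S\lc\mapsto(S\lc)\uc$ converts the existence of an incremented join into the existence of a genuine minimum before any order-theoretic niceness of $\absto$ has been established. I expect this interplay of (O1) and (O2), rather than any single estimate, to be the crux of the argument.
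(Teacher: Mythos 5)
Your proposal is correct and follows essentially the same route as the paper: the crux in both is to apply (O1) to a nonempty $S$ to get $S\lc\in\pow_\univ\absto$, use (O2) to obtain $\bigveeplus S\lc$, and invoke the poset-valid half of Lemma~\ref{lem:max}(iii) to conclude $\bigveeplus S\lc=\min S$. You merely spell out the remaining verifications ($X\uc\neq\varnothing$ and the converse) that the paper dismisses as obvious, and your observation about which half of Lemma~\ref{lem:max}(iii) avoids circularity is accurate.
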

	
	\begin{proof}
		Consider an ordinal system $\absto$ relative to a universe $\univ$. Let $X$ be a non-empty subset of $\absto$. Then $X\lc \in \pow_\univ \absto$ by (O1), and thus $\bigveeplus X\lc$ exists by (O2). 
		%
		By Lemma~\ref{lem:max}(iii), $\bigveeplus X\lc=\min X$. This proves the `only if' part of the theorem. Note that any well-ordered set is totally ordered: having a smallest element of a two-element subset $\{x,y \}$ forces $x$ and $y$ to be comparable. The `if' part is quite obvious.
	\end{proof}
	
	We will use this theorem often without referring to it. One of its consequences is that each non-empty ordinal system has a smallest element. We denote this element by $0$. Note that $0$ is a limit ordinal. Since by the same theorem an ordinal system is a total order, the properties (L4--7) above apply to an ordinal system. In particular, we get that the successor function is injective. We also get the following:
	
	\begin{lemma}\label{lem:limor}
	In an ordinal system, for an ordinal $x$ the following conditions are equivalent:
	\begin{enumerate}[(i)]
	\item $x$ is a limit ordinal. 
	
	\item $\{x\}\lc$ is closed under successors.
    
    \item $x=\bigvee\{x\}\lc$. 
	\end{enumerate}
	\end{lemma}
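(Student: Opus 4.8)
The plan is to run the cycle (i) $\Rightarrow$ (ii) $\Rightarrow$ (iii) $\Rightarrow$ (i), writing $S=\{x\}\lc$ for the set of strict predecessors of $x$ and using throughout the identity $x=\bigveeplus S$ from (L4), valid because an ordinal system is totally ordered (Theorem \ref{thm:wo}). The conceptual pivot of the whole argument is the dichotomy that $x$ is a successor exactly when $S$ has a largest element: if $x=m^+$ then (L5) gives $S=\{y\mid y\leqslant m\}$, so $m=\max S$; conversely, if $\max S$ exists then Lemma \ref{lem:max}(ii) yields $x=\bigveeplus S=(\max S)^+$, exhibiting $x$ as a successor.

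For (i) $\Rightarrow$ (ii), I take $y\in S$, so $y<x$, and apply (L3) to get $y^+\leqslant x$; since $y^+=x$ would make $x$ a successor, contradicting (i), we must have $y^+<x$, that is $y^+\in S$, so $S$ is closed under successors. For (ii) $\Rightarrow$ (iii), the main step is to rule out a largest element of $S$: if $m=\max S$ then (ii) forces $m^+\in S$, yet $m<m^+$ by (L1) contradicts maximality. With $S$ having no largest element, Lemma \ref{lem:max}(i) lets me pass from the incremented join to the ordinary one, giving that $\bigvee S$ exists and equals $\bigveeplus S=x$. The degenerate case $x=0$, where $S=\varnothing$, is covered automatically, since the empty set has no largest element and $\bigvee\varnothing=\bigveeplus\varnothing=0$. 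Finally, for (iii) $\Rightarrow$ (i) I argue contrapositively: if $x=m^+$ is a successor then the pivot observation gives $m=\max S$, hence $\bigvee S=m$, but $x=m^+>m$ by (L1), so $x\neq\bigvee S$, contradicting (iii).

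The only genuine obstacle is justifying the interchange between the ordinary join $\bigvee$ and the incremented join $\bigveeplus$, and this is precisely what Lemma \ref{lem:max} was set up to handle; everything else is a routine application of (L1), (L3) and (L5) together with the successor-versus-maximum dichotomy.
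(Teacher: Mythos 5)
Your proof is correct and follows essentially the same route as the paper: the cycle (i)~$\Rightarrow$~(ii)~$\Rightarrow$~(iii)~$\Rightarrow$~(i) using (L4) for $x=\bigveeplus\{x\}\lc$, (L3) for the first implication, (L1) and Lemma~\ref{lem:max} for the second, and (L5) with (L1) for the third. The extra detail you supply (the successor-versus-maximum dichotomy and the $x=0$ case) is a harmless elaboration of what the paper leaves implicit.
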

	
	\begin{proof}
	 We have $x=\bigveeplus\{x\}\lc$ for any ordinal $x$ (L4). If $x$ is a limit ordinal then for each $y<x$ we have $y^+<x$ (L3). So (i)${}\Rightarrow{}$(ii). If (ii) holds, by by (L1), we get that $\{x\}\lc$ does not have a largest element. So $\bigveeplus\{x\}\lc=\bigvee\{x\}\lc$ (Lemma \ref{lem:max}). This gives (ii)${}\Rightarrow{}$(iii). Suppose now $x=\bigvee\{x\}\lc$. If $x$ were a successor ordinal $x=y^+$, then by (L5), $y$ would be the join of $\{x\}\lc$. However, $x\neq y$ by (L1), and therefore, $x$ must be a limit ordinal. Thus, (iii)${}\Rightarrow{}$(i).
	\end{proof}
	
	The following theorem gives yet another way of thinking about an abstract ordinal number system.
	
	\begin{theorem}\label{thm:def2}
		A poset $(\absto,\leqslant)$ is an ordinal system relative to a universe $\univ$ if and only if (O1) holds along with the following axioms:
		\begin{itemize}
			\item[(O$2$a)] For all $X\in\pow_\univ \absto$, the join $\bigvee X$ exists.
			\item[(O$2$b)] $x^+$ exists for each $x\in\absto$.
		\end{itemize}
	\end{theorem}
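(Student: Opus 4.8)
The plan is to note that (O1) is common to both sides of the biconditional, so the real content is to show that, for a poset satisfying (O1), axiom (O2) is equivalent to (O2a) together with (O2b). I would organize both directions around a single dichotomy: a subset $X$ either possesses a largest element or it does not, and Lemma~\ref{lem:max} expresses the incremented join $\bigveeplus X=\min(X\uc)$ accordingly --- as $(\max X)^+$ in the first case (Lemma~\ref{lem:max}(ii)) and as $\bigvee X$ in the second (Lemma~\ref{lem:max}(i)). So the task is to read each of the three axioms through this rewriting.

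For the forward implication I would assume $\absto$ is an ordinal system. Then (O2b) is immediate --- it is exactly the remark after Definition~\ref{def:aos} that (O2) turns $x\mapsto x^+$ into a total function, since $\{x\}\in\pow_\univ\absto$ by Lemma~\ref{lem:fini} once (O1) forces $\univ$ to be non-empty (the case $\absto=\varnothing$ being vacuous). For (O2a) I would fix $X\in\pow_\univ\absto$ and split on whether $X$ has a largest element: if it does, that element is itself the least upper bound, so $\bigvee X$ exists; if it does not, Lemma~\ref{lem:max}(i) yields $\bigvee X$ from the existence of $\bigveeplus X$, which (O2) supplies. Either way $\bigvee X$ exists.

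For the reverse implication I would assume (O1), (O2a) and (O2b) and, fixing $X\in\pow_\univ\absto$, aim to produce $\bigveeplus X$. The subtle point --- and the only place where care is needed --- is that here one must \emph{not} invoke Theorem~\ref{thm:wo}: that theorem presupposes (O2), which is precisely what is being proved, so no total order is yet available. Consequently I would use only the first halves of Lemma~\ref{lem:max}(i) and (ii), which hold in arbitrary posets, rather than their converse clauses that assume totality. If $X$ has a largest element $x$, then $\bigveeplus X$ exists because $x^+$ does by (O2b) (Lemma~\ref{lem:max}(ii)); if $X$ has no largest element, then $\bigveeplus X$ exists because $\bigvee X$ does by (O2a) (Lemma~\ref{lem:max}(i)). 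Since every $X\in\pow_\univ\absto$ --- including $X=\varnothing$, which trivially has no largest element --- falls into one of these two cases, (O2) follows and the equivalence is established. The argument is short and contains no real obstacle beyond the bookkeeping just flagged, namely that the converse direction may route only through the general-poset clauses of Lemma~\ref{lem:max}.
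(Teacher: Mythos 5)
Your proof is correct and follows exactly the route the paper intends: its proof of Theorem~\ref{thm:def2} consists precisely of the remark that the statement follows easily from parts (i) and (ii) of Lemma~\ref{lem:max}, which is the case split on whether $X$ has a largest element that you carry out. Your additional care in the reverse direction --- using only the general-poset halves of Lemma~\ref{lem:max} rather than the converse clauses requiring totality --- is a sensible and accurate filling-in of the details the paper leaves implicit.
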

	
	\begin{proof}
	This can easily be proved using (i) and (ii) of Lemma~\ref{lem:max}.
	\end{proof}
	
	Transfinite induction and recursion are well known for well-ordered sets. We formulate them here (in one particular form, out of many possibilities) in the case of ordinal systems, since we will use them later on in the paper. We have included our own direct proofs, for the sake of completeness, but we do not expect these proofs to have any new arguments that do not already exist in the literature. 
	
	\begin{theorem}[transfinite induction]\label{thm:ind1}
		Let $\absto$ be an ordinal system and let $X\subseteq \absto$ satisfy the following conditions:
		\begin{itemize}
			\item[(I1)] $X^+ \subseteq X$;
			\item[(I2)] for every limit ordinal $x$, if $\{x\}\lc \subseteq X$ then $x\in X$.
		\end{itemize}
		Then $X=\absto$.
	\end{theorem}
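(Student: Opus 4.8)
The plan is to argue by contradiction, using the fact that $\absto$ is well-ordered, which is guaranteed by Theorem~\ref{thm:wo}. Suppose, for contradiction, that $X \neq \absto$. Then the complement $\absto \setminus X$ is a non-empty subset of $\absto$, so by well-ordering it has a smallest element $x$. By minimality of $x$, every ordinal strictly below $x$ belongs to $X$; in other words $\{x\}\lc \subseteq X$. I would then derive a contradiction by splitting into the two possibilities for $x$: it is either a successor ordinal or a limit ordinal, these being mutually exclusive and jointly exhaustive by the very definition of a limit ordinal as a non-successor.

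In the successor case, write $x = y^+$ for some $y \in \absto$. By (L1) we have $y < y^+ = x$, so $y \in \{x\}\lc \subseteq X$. Hypothesis (I1) then gives $x = y^+ \in X^+ \subseteq X$, contradicting $x \in \absto \setminus X$. In the limit case, the inclusion $\{x\}\lc \subseteq X$ already obtained from minimality, combined with hypothesis (I2), immediately yields $x \in X$, again a contradiction. Since both cases are impossible, $\absto \setminus X$ must be empty, that is, $X = \absto$.

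I do not anticipate any genuine obstacle here: once well-ordering supplies a least counterexample $x$, the two hypotheses (I1) and (I2) are tailored precisely to the successor and limit cases. The only points deserving care are, first, that the inclusion $\{x\}\lc \subseteq X$ follows from minimality alone and so holds irrespective of the nature of $x$, and second, that the successor/limit dichotomy is exactly the one furnished by the definition of limit ordinal, so that the two cases genuinely exhaust all possibilities for $x$.
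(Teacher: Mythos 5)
Your proposal is correct and follows essentially the same argument as the paper: take a least element of $\absto\setminus X$ using well-ordering (Theorem~\ref{thm:wo}), then rule out the successor case via (I1) and the limit case via (I2). Your version just spells out the details (the use of (L1) to get $y<y^+$, and the exhaustiveness of the dichotomy) slightly more explicitly than the paper does.
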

	
	\begin{proof}
		Since $\absto$ is well-ordered, if $\absto \setminus X$ is non-empty then it has a smallest element $y$. By (I1), $y$ cannot be a successor of any $z<y$ in $X$. By (I2), it also cannot be a limit ordinal. This is a contradiction since a limit ordinal is defined as one that is not a successor ordinal.
	\end{proof}
	
	Here is one of the immediate consequences of transfinite induction (the proof will require also the total order of an ordinal system, as well as the properties (L3) and (L4)):
	
	\begin{corollary}\label{cor:subordinal}
	Let $\absto$ be an ordinal system relative to a universe $\univ$ and let $X\subseteq \absto$ satisfy the following conditions:
		\begin{itemize}
			\item[(S1)] $X$ is down-closed in $\absto$, i.e., if $x<y\in X$ then $x\in X$, for all $x,y\in\absto$;
			\item[(S2)] $X$ is an ordinal system relative to $\univ$ under the restriction of the order of $\absto$. 
		\end{itemize}
		Then $X=\absto$.
	\end{corollary}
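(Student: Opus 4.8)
The plan is to derive the result from transfinite induction (Theorem~\ref{thm:ind1}) applied to the subset $X\subseteq\absto$, so that it suffices to verify the two hypotheses (I1) $X^+\subseteq X$ and (I2) that every limit ordinal $x$ with $\{x\}\lc\subseteq X$ lies in $X$. Throughout I write $x^+$ and $\bigvee$ for the successor and join computed in $\absto$, and I use (S2), which guarantees that the order on $X$ is the restriction of that of $\absto$, so that $<$ on $X$ agrees with $<$ on $\absto$. The single guiding idea behind both conditions is this: the element manufactured by the ordinal-system structure of $X$ itself -- a successor, respectively a join, taken \emph{inside} $X$ -- will be an element of $X$ that dominates the $\absto$-element I want to capture, and then down-closedness (S1) will pull that $\absto$-element back into $X$.

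For (I1), I would take $x\in X$ and use that $X$ is an ordinal system to form the successor $s$ of $x$ \emph{within} $X$; it lies in $X$ and satisfies $x<s$ by (L1). Applying (L3) in $\absto$ then gives $x^+\leqslant s$. Since $s\in X$ and $X$ is down-closed, either $x^+=s\in X$, or $x^+<s\in X$ and hence $x^+\in X$ by (S1); in both cases $x^+\in X$, which is exactly $X^+\subseteq X$.

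For (I2), let $x$ be a limit ordinal with $\{x\}\lc\subseteq X$. First I would observe that $\{x\}\lc\in\pow_\univ\absto$ (this is (O1) when $\{x\}\lc\neq\varnothing$; when $x=0$ it holds because $\absto\neq\varnothing$ forces $\univ\neq\varnothing$, whence $\varnothing\in\pow_\univ\absto$), and since $\{x\}\lc\subseteq X$ this membership transfers to $\{x\}\lc\in\pow_\univ X$. Thus (O2a) of Theorem~\ref{thm:def2}, applied to the ordinal system $X$, yields a join $j=\bigvee_X\{x\}\lc$ lying in $X$. Because $x$ is a limit ordinal, $x=\bigvee\{x\}\lc$ in $\absto$ by Lemma~\ref{lem:limor}; and as $j$ is an upper bound of $\{x\}\lc$ in $\absto$ (the order being the restriction), the least-upper-bound property forces $x\leqslant j$. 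Now $j\in X$ and $X$ is down-closed, so exactly as in the successor step $x\in X$. With (I1) and (I2) in hand, transfinite induction gives $X=\absto$.

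I expect the only genuinely delicate point to be the bookkeeping about $\pow_\univ$ relative to the two ambient sets $\absto$ and $X$: one must check that membership of the down-set $\{x\}\lc$ in $\pow_\univ\absto$, together with $\{x\}\lc\subseteq X$, transfers to membership in $\pow_\univ X$, so that (O2a) becomes applicable to $X$. This is immediate from the definition of the restricted power set (or from Lemma~\ref{lem:subs}), but it is the hinge on which the limit step turns. The degenerate case $\absto=\varnothing$ is trivial, and the base instance $x=0$ is absorbed into the uniform limit argument, since then $\{0\}\lc=\varnothing$, $0=\bigvee\varnothing$ is the least element of $\absto$, and (O2a) simultaneously forces $X$ to be nonempty and produces $j=\min X\geqslant 0$.
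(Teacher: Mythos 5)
Your proof is correct and follows exactly the route the paper intends: the paper leaves this corollary unproved, remarking only that it is an immediate consequence of transfinite induction together with total order, (L3) and (L4), and your argument supplies precisely that -- (L3) plus down-closedness for the successor step (I1), and the join characterization of limit ordinals (Lemma~\ref{lem:limor}, resting on (L4) and totality) plus down-closedness for the limit step (I2). The bookkeeping you flag about transferring $\{x\}\lc$ from $\pow_\univ\absto$ to $\pow_\univ X$ is handled correctly, including the $x=0$ case.
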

	
	\begin{theorem}[transfinite recursion]
		Let $\absto$ be an ordinal system and let $X=(X,V,T)$, where $X$ is a set, $T$ is a function $X\to X$ and $V$ is a function $\pow_\univ X\to X$. Then there exists a unique function $f\colon \absto\to X$ that satisfies the following conditions:
		\begin{itemize}
        	\item[(R1)] $f(x^+ ) = T(f(x))$ for any $x\in\absto$,
			\item[(R2)] $f(x) = V(\{f(y)\mid y<x \})$ for any limit ordinal $x$.
		\end{itemize}
	\end{theorem}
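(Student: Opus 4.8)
The plan is to construct $f$ as the union of a compatible family of partial solutions defined on down-closed subsets (initial segments) of $\absto$, and then to prove that this union is in fact total by transfinite induction (Theorem~\ref{thm:ind1}). First I would dispose of the degenerate case $\absto=\varnothing$, taking $f=\varnothing$; otherwise $\absto\neq\varnothing$ forces $\univ\neq\varnothing$, which is exactly what guarantees $\varnothing\in\pow_\univ X$ and hence that $V(\varnothing)$ is defined, as is needed for the value $f(0)$ at the smallest (limit) ordinal $0$. I would then call a pair $(D,g)$ an \emph{attempt} when $D\subseteq\absto$ is down-closed and $g\colon D\to X$ satisfies the restricted forms of (R1) and (R2): $g(x^+)=T(g(x))$ whenever $x^+\in D$, and $g(x)=V(\{g(y)\mid y<x\})$ for every limit ordinal $x\in D$. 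Because $D$ is down-closed, $\{x\}\lc\subseteq D$ for each $x\in D$, so these conditions are meaningful; moreover the second is well-posed because $\{x\}\lc\in\pow_\univ\absto$ by (O1${}'$), so that by Lemma~\ref{lem:imag} its image $\{g(y)\mid y<x\}$ lies in $\pow_\univ X$ and $V$ is applicable.

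The first key step is \textbf{compatibility}: any two attempts $(D_1,g_1)$ and $(D_2,g_2)$ agree on $D_1\cap D_2$. I would prove this by transfinite induction applied to $Y=\{x\in\absto\mid x\in D_1\cap D_2\Rightarrow g_1(x)=g_2(x)\}$. Condition (I1) follows from the successor clause, $g_1(x^+)=T(g_1(x))=T(g_2(x))=g_2(x^+)$; condition (I2) follows from the limit clause, since if all $y<x$ lie in $Y$ then $\{g_1(y)\mid y<x\}=\{g_2(y)\mid y<x\}$ and hence $g_1(x)=V(\cdots)=g_2(x)$. Theorem~\ref{thm:ind1} then gives $Y=\absto$. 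Next I would set $g^*=\bigcup\{g\mid (D,g)\text{ an attempt}\}$ with domain $D^*$ the union of the corresponding domains; compatibility makes $g^*$ a well-defined function, $D^*$ is down-closed, and $g^*$ is itself an attempt, since each instance of the successor or limit clause at a point is already witnessed inside a single attempt and is preserved by passing to the extension $g^*$.

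The second key step is \textbf{totality}: $D^*=\absto$, again via Theorem~\ref{thm:ind1}. For (I1), given $x\in D^*$ I would take an attempt $(D,g)$ with $x\in D$ and extend it to $D\cup\{x^+\}$ by $g(x^+)=T(g(x))$; down-closure of the new domain uses (L5) (as $z<x^+$ gives $z\le x$), and the successor clause remains valid because $w^+=x^+$ forces $w=x$ by (L6). This exhibits $x^+\in D^*$. The hard part is (I2): for a limit ordinal $x$ with $\{x\}\lc\subseteq D^*$ I would extend $g^*$ to $D^*\cup\{x\}$ by $g^*(x)=V(\{g^*(y)\mid y<x\})$. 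Here every hypothesis pulls its weight at once: $\{x\}\lc\in\pow_\univ\absto$ by (O1${}'$) makes the predecessor set $\univ$-small, Lemma~\ref{lem:imag} transports that smallness across $g^*$ so the argument of $V$ lies in $\pow_\univ X$ and the value is defined, and it is precisely the union-of-attempts construction (rather than a naive pointwise recursion) that supplies the values $g^*(y)$ for \emph{all} $y<x$ simultaneously from possibly different attempts. Thus $x\in D^*$, and Theorem~\ref{thm:ind1} yields $D^*=\absto$.

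Finally, $D^*=\absto$ means $f:=g^*$ is a total function $\absto\to X$ which, being an attempt on all of $\absto$, satisfies (R1) and (R2) in full. Uniqueness is then immediate: any two total solutions are attempts with domain $\absto$, so compatibility forces them to coincide. The only genuine subtlety in the whole argument is the limit step (I2)—ensuring $V$ is applicable—while the successor step, compatibility, and the verification that $g^*$ is an attempt are routine consequences of (L5), (L6), and Theorem~\ref{thm:ind1}.
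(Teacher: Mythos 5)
Your proof is correct and takes essentially the same approach as the paper: both construct $f$ by gluing partial solutions defined on initial segments, with existence and uniqueness of those partial solutions established by transfinite induction (Theorem~\ref{thm:ind1}), the successor step handled via $T$ and injectivity of succession and the limit step via applying $V$ to the image of a predecessor set. The only difference is bookkeeping: the paper runs a single induction showing that the set $F_x$ of solutions on $\{y\in\absto\mid y\leqslant x\}$ is a singleton for each $x$, whereas you split this into a compatibility lemma for attempts on arbitrary down-closed domains followed by a separate totality induction for their union.
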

	
	\begin{proof}
	 For each ordinal $x\in\absto$, let $F_x$ be the set consisting of all functions 
	 \begin{gather*}
	 f_x \colon \{y\in\absto\mid y\leqslant x \}\to X
	 \end{gather*}
	 that satisfy the following conditions:
		\begin{enumerate}[(i)]
			\item $f(y^+ ) = T(f(y))$ for any successor ordinal $y^+ \leqslant x$, and
			\item $f(y) = V(\{f(z)\mid z<y \})$ for any limit ordinal $y\leqslant x$.
		\end{enumerate}
		Note that any function $f$ satisfying (R1--2) must have a subfunction in each set $F_x$. Also, for any $x\in\absto$, a function $f_x\in F_x$ must have a subfunction in each set $F_y$ where $y<x$. 
		We prove by transfinite induction that for each $x\in\absto$, the set $F_x$ contains exactly one function $f_x$.
		\begin{description}
	        \item[Successor case.] Suppose that for some ordinal $x$ and each $y\leqslant x$ there exists a unique function $f_y\in F_y$. Then $g=\uni\{f_x, \{(x^+ ,T(f_x(x)))\}\}$ satisfies (i,ii) and thus $g\in F_{x^+ }$.
			
			Now consider any function $g'\in F_{x^+ }$. Since $g$ and $g'$ must both have the unique function $f_x\in F_x$ as a subfunction, $g$ and $g'$ are identical on the domain $\{y\mid y\leqslant x \}$. But since $g$ and $g'$ both satisfy condition (i), we get that $g'(x^+ ) = T(f_x(x)) = g(x^+ )$ and thus $g'=g$.			
			\item[Limit case.] Suppose that for some limit ordinal $x$ and each  $y<x$ there exists a unique function $f_y\in F_y$. Then for any two ordinals $y<y'<x$, the function $f_y$ is a subfunction of $f_{y'}$, which is in turn a subfunction of every function in $F_x$. The relation 
		    \begin{gather*}
			g = \uni \{\{(x,V(\{f_y(y)\mid y<x \}))\},\uni \{f_y\mid y<x\}\}
			\end{gather*}
		    is then a function over the domain $\{y\mid y\leqslant x\}$ and moreover, it is easy to see that $g\in F_x$.
			
			Now consider any function $g'\in F_x$. Since for each ordinal $y<x$ the unique function $f_y\in F_y$ is a subfunction of both $g$ and $g'$, we see that they are identical on the domain $\{y\mid y<x \}$, and since they both satisfy condition (ii), the following holds:
			\begin{gather*}
			g(x) = V(\{g(y)\mid y<x \}) = V(\{g'(y)\mid y<x \}) = g'(x).
			\end{gather*}
			We can conclude that $g'=g$ and thus $g$ is the unique function in $F_{x}$.
		\end{description}
		We showed that for each ordinal $x$, exactly one function $f_x\in F_x$ exists. Construct a function $f\colon \absto\to X$ as follows: for each ordinal $x$,
		\begin{gather*}
		f(x) = f_x(x).
		\end{gather*}
		It satisfies (R1--2) because each $f_x$ satisfies (i,ii). Since any function satisfying (R1--2) must have a subfunction in each set $F_x$, we can conclude that $f$ is the unique function satisfying (R1--2).
	\end{proof}
	
	\section{Concrete ordinals}\label{sec:concrete}
	
	For a universe $\univ$, define a \emph{$\univ$-ordinal} to be a transitive set that belongs to the universe $\univ$ and is a well-ordered set under the relation
	\begin{gather*}
	x\inoreq y \quad\Leftrightarrow\quad [x\in y]\lor [x=y].
	\end{gather*}
	Thus, a $\univ$-ordinal is nothing but a usual von Neumann ordinal number \cite{von1923einfuhrung} that happens to be an element of $\univ$. In other words, it is a von Neumann ordinal number \emph{internal} to the universe $\univ$. Thus, at least one direction in the following theorem is well known. We include a full proof for completeness.
	
	\begin{theorem}\label{thm:ord}
		A set $O$ is the set of all $\univ$-ordinals if and only if the following conditions hold:
		\begin{enumerate}[(i)]
		\item $\varnothing\in O$ provided $\varnothing\in\univ$ ($\Leftrightarrow\univ\neq\varnothing$), and $O \subseteq \univ$, 
		\item $O$ is a transitive set, 
		
		\item $O$ is an ordinal system relative to $\univ$ under the relation $\inoreq$. 
		\end{enumerate}
	When these conditions hold, the incremented join of $X\in\pow_\univ O$ is given by $\bigveeplus X=\uni\{\uni X,X\}$.
	\end{theorem}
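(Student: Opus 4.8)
The plan is to prove the two implications separately and then read off the formula, writing $O'$ for the set of all $\univ$-ordinals (which is a set, being a subclass of $\univ$ by separation). For the forward implication I would first set up the classical structural facts about von Neumann ordinals, relativised to $\univ$: that every member of a $\univ$-ordinal is again a $\univ$-ordinal (so $O'$ is transitive, giving (ii), while (i) is immediate since $\varnothing$ is vacuously a $\univ$-ordinal precisely when $\varnothing\in\univ$, and $O'\subseteq\univ$ by definition); that $\inoreq$ is a partial order on $O'$ (reflexivity is trivial, antisymmetry is foundation, and transitivity of $\inoreq$ uses that each ordinal is a transitive set); and the trichotomy law, so that $(O',\inoreq)$ is in fact a well-order. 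With this in hand, (O1) is cheap: for non-empty $X\subseteq O'$ and any $x_0\in X$ we have $X\lc\subseteq x_0$, and since $x_0\in O'\subseteq\univ$ gives $x_0\in\pow_\univ O'$, Lemma~\ref{lem:subs} yields $X\lc\in\pow_\univ O'$.

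For (O2) and the closing formula together, I would show that $u:=\uni\{\uni X,X\}$ is exactly the least strict upper bound $\bigveeplus X$ of $X\in\pow_\univ O'$. That $u$ is a strict upper bound is immediate from $X\subseteq u$. That it is the least one: for any $v\in X\uc$, transitivity of $v$ forces $\uni X\subseteq v$ and $X\subseteq v$, hence $u\subseteq v$, and the fact that $u\subsetneq v\Rightarrow u\in v$ for ordinals promotes $u\subseteq v$ to $u\inoreq v$. Finally $u$ must be shown to lie in $O'$: membership $u\in\univ$ follows because $X\in\pow_\univ O'\subseteq\pow_\univ\univ\subseteq\univ$ by Lemma~\ref{lem:com}, whence $\uni X\in\univ$, then $\{\uni X,X\}\in\univ$ by (U2), and $u\in\univ$ by closure under unions; and $u$ is transitive and, via trichotomy, well-ordered, so it is a $\univ$-ordinal. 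The formula $\bigveeplus X=\uni\{\uni X,X\}$ is then literally what we proved, and one may double-check it against the two cases where $X$ does or does not have a largest element (recovering $\beta^+$ and $\sup X=\uni X$ respectively).

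The backward implication I would handle by a clean reduction to Corollary~\ref{cor:subordinal}, with $\absto=O'$ (an ordinal system by the forward implication) and $X=O$. First, $O\subseteq O'$: any $\alpha\in O$ lies in $\univ$ by (i), is transitive because $O$ is transitive and $\inoreq$ is transitive on the poset $O$, and is well-ordered as a subset of the well-ordered set $O$; hence $\alpha$ is a $\univ$-ordinal. Next, $O$ is down-closed in $O'$: if $\beta\in\alpha\in O$ then $\beta\in O$ by transitivity of $O$, which is condition (S1); and (S2) is just (iii), since the order of $O'$ restricts to $\inoreq$ on $O$. Corollary~\ref{cor:subordinal} then gives $O=O'$, and the supremum formula transfers verbatim.

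The step I expect to be the main obstacle is the forward implication, specifically the interplay between the classical trichotomy and well-ordering of $\univ$-ordinals (the genuinely non-trivial ordinal-theoretic input, needed in order to know that the candidate supremum $u$ is itself a $\univ$-ordinal lying in $O'$) and the universe bookkeeping that keeps $\uni\{\uni X,X\}$ inside $\univ$. Once trichotomy and Lemma~\ref{lem:com} are available, the verification of (O1), (O2) and the formula is routine, and the backward direction is almost immediate from Corollary~\ref{cor:subordinal}.
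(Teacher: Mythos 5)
Your proposal is correct and follows essentially the same route as the paper: the forward direction establishes the relativised von Neumann facts (members of ordinals are ordinals, trichotomy, well-ordering), verifies (O1) by bounding $X\lc$ inside a member of $X$, and exhibits $\uni\{\uni X,X\}$ directly as the incremented join after checking it is a $\univ$-ordinal; the backward direction shows $O\subseteq O'$ and concludes via Corollary~\ref{cor:subordinal}. The only (inessential) differences are that the paper works with the equivalent (O1${}'$) and proves trichotomy/well-ordering explicitly through its Facts 1 and 2, which you defer to as classical.
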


	\begin{proof}
		Let $O$ be the set of all $\univ$-ordinals. Then (i) follows easily from the definition of a $\univ$-ordinal. To show (ii), let $y\in x\in O$. We want to show that $y$ is a $\univ$-ordinal. Since $x\in\univ$ also $y\in\univ$ by (U1). Since $y\subseteq x$ and $x$ is well-ordered (under $\inoreq$), so is $y$. Next, we must prove that $y$ is transitive. Let $z\in y$. Then $z\in x$ since $x$ is transitive. We want to show $z\subseteq y$, so suppose $t\in z$. Then $t\in x$. By the fact that $\inoreq$ is an order on $x$, we must have $t\inoreq y$. We cannot have $t=y$, since $t\in z\in y$, and so $t\in y$. Next, we prove (iii). It is easy to see that the relation $\inoreq$ is a partial order on $O$: reflexivity is obvious, while transitivity follows from each element of $O$ being a transitive set. To prove (O1${}'$), consider $x\in O$. Then we have:
		\begin{gather*}
		\{x\}\lc  = \{y\in O\mid y\in x\}\subseteq x. 
		\end{gather*}
		Since $x$ is a $\univ$-ordinal, $x\in \pow_\univ O$ and so $\{x\}\lc \in \pow_\univ O$ (Lemma~\ref{lem:subs}). Note that by (ii) we actually have $x=\{x\}\lc $, although we did not need this to establish (O1${}'$). 
		
		For what follows, we will first establish the following:
        \begin{description}
        \item[Fact 1.] $x\cap y=\min(y\setminus x)$ for any two $\univ$-ordinals $x$ and $y$ such that $y\setminus x\neq\varnothing$. 
     \end{description}
		Let $a\in x\cap y$. Then $a\notin y\setminus x$. Hence $a\neq \min(y\setminus x)$. By the well-ordering of $y$, we then get that either $a\in\min(y\setminus x)$ or $\min(y\setminus x)\in a$. By transitivity of $x$ and the fact that $\min(y\setminus x)\notin x$, the second option is excluded. So $a\in\min(y\setminus x)$. This shows $x\cap y\subseteq \min(y\setminus x)$. Now suppose $a\in \min(y\setminus x)$. Then $a\in y$, by transitivity of $y$. If $a\notin x$, then $a\in y\setminus x$ which would give $ \min(y\setminus x)\inoreq a$, which is impossible. So $a\in x$. This proves $\min(y\setminus x)\subseteq x\cap y$. Hence $\min(y\setminus x)=x\cap y$.
		
		From Fact 1 we easily get the following:
		\begin{description}
        \item[Fact 2.] $x\inoreq y$ if and only if $x\subseteq y$, for any two $\univ$-ordinals $x$ and $y$.
     \end{description}
        This in turn implies that $O$ is totally ordered under $\inoreq$. Indeed, consider $x,y\in O$. If $x\neq y$ then either $x\setminus y$ or $y\setminus x$ is non-empty. Without loss of generality, suppose $y\setminus x$ is non-empty. By Fact 1,  $\min(y\setminus x)\subseteq x$. Then, by Fact 2, either $\min(y\setminus x)\in x$ or $\min(y\setminus x) = x$, and since $\min(y\setminus x)\in y\setminus x$ we can conclude that $x=\min(y\setminus x)$ and thus $x\in y$.
        
        We now show that $O$ is well-ordered under the relation $\inoreq$. Let $Y\subseteq O$ be nonempty. Take any $y\in Y$. If $y\cap Y=\varnothing$, then for all $x\in Y$ we have $x\notin y$ and thus $y=\min Y$ by total ordering. If $y\cap Y\neq \varnothing$, then $\min(y\cap Y)$ exists, since $y\cap Y\subseteq y$. For all $x\in Y\setminus y$, it holds that $x\notin y$ and thus  $y\subseteq x$ (by total ordering and Fact 2), which in turn implies $\min(y\cap Y)\in x$. We can conclude that $\min(y\cap Y)\in x$ for all $x\in Y$, and thus $\min Y = \min (y\cap Y)$.

		We will now complete the proof of (iii) and prove the last statement of the theorem simultaneously. Consider $X\in\pow_\univ O$. By (i) and Lemma~\ref{lem:com}, $X\in\univ$. Then $\uni \{\uni X,X\}\in\univ$. To show that $\uni \{\uni X,X\}$ is a $\univ$-ordinal, we need to prove that it is a transitive set, well-ordered under the relation $\inoreq$. Since each element of $X$ is transitive, we have $\uni \uni X\subseteq \uni X$, which implies transitivity of $\uni \{\uni X,X\}$: 
		\begin{gather*}
		\uni \uni \{\uni X,X\} = \uni \{\uni \uni X,\uni X\}=\uni X\subseteq \uni \{\uni X,X\}.
		\end{gather*}
		It follows from (ii) that each element of $\uni \{\uni X,X\}$ is a $\univ$-ordinal. So the fact that $\uni \{\uni X,X\}$ is well-ordered under the relation $\inoreq$ follows from the fact that $O$ is well-ordered under the same relation, as we have already proved. Thus, $\uni \{\uni X,X\}$ is a $\univ$-ordinal. Any $\univ$-ordinal $y$ such that $x\in y$ for every element $x\in X$ will have $\uni \{\uni X,X\}\subseteq y$ and thus $\uni \{\uni X,X\}\inoreq y$ by Fact 2. To conclude that $\uni \{\uni X,X\}$ is the incremented join of $X$, it remains to make a trivial remark that for each $x\in X$ we have $x\in \uni \{\uni X,X\}$.
		
		It remains to prove that if (i--iii) hold then $O$ is the set of all $\univ$-ordinals. Let $O$ be any set satisfying (i--iii). First we prove that every element of $O$ is a $\univ$-ordinal. Let $x\in O$. By (iii), the set $O$ is ordered under the relation $\inoreq$. In this ordered set, the set $\{x\}\lc $ consists of those elements of $x$ which are also elements of $O$. By (ii), this is all elements of $x$ and so $x=\{x\}\lc $. By (i) and (iii), $x\in\univ$.
		Furthermore, for any $y\in x$ the following holds:
		\begin{gather*}
		y = \{y\}\lc  \subseteq \{x\}\lc  = x.
		\end{gather*}
		This shows that $x$ is transitive. 
		Since $O$ is well-ordered under $\inoreq$ (by (iii) and Theorem~\ref{thm:wo}) and $x\subseteq O$, $x$ is also well-ordered under the same relation. Thus, every element of $O$ is a $\univ$-ordinal. 
		Now we need to establish that every $\univ$-ordinal is in $O$. We already proved that the set $\absto$ of all $\univ$-ordinals is an ordinal system, and since $O$ is a set of $\univ$-ordinals, $O\subseteq \absto$. The equality $O=\absto$ then follows from Corollary~\ref{cor:subordinal}. 
	\end{proof}
	
	\section{A Dedekind-style axiomatization}
	
	A \emph{limit-successor system} is a triple $X=(X,L,s)$ where $X$ is a set, $L$ is a partial function $L\colon \pow X\to X$ called the \emph{limit function} and $s$ is a function $s\colon X\to X$ called the \emph{successor function}. A \emph{successor-closed} subset of a limit-successor system $X$ is a subset $I$ of $X$ such that $sI\subseteq I$. For such subset, write $s^{-1}I$ to denote
	\begin{gather*}
	    s^{-1}I=\{x\in X\mid s(x)\in I\}
	\end{gather*}
	and $L^{-1}I$ to denote
	\begin{gather*}
	    L^{-1}I=\{A\in\mathsf{dom}L\mid L(A)\in I\}.
	\end{gather*}
    A subset $I\subseteq X$ is said to be \emph{closed} when 
    \begin{gather*}
        s^{-1}I\subseteq I\textrm{ and }\uni L^{-1}I\subseteq I.
    \end{gather*}
    It is not difficult to see that closed subsets form a topology on $X$; in fact, an Alexandrov topology. We denote the closure of a subset $I$ in this topology by $\overline{I}$. Recall that the corresponding `specialization preorder' given by 
    \begin{gather*}
        x\leqslant y\quad \Leftrightarrow\quad x\in\overline{\{y\}}
    \end{gather*}
    is a preorder (as it is for any Alexandrov topology) and that $x\in\overline{I}$ if and only if $x\leqslant y$ for some $y\in I$.
    
    We abbreviate the operator $s^{-1}$ composed with itself $m$ times as $s^{-m}$, with the $m=0$ case giving the identity operator. We write $
    s^{-\infty}$
    for the operator defined by
    \begin{gather*}
      s^{-\infty}I=\uni \{s^{-m} I\mid m\in\mathbb{N}\}
    \end{gather*}
    and $\uni{L^{-1}}$ for the operator $I\mapsto \uni L^{-1}I$.
    It is easy to see that the closure of a subset $I\subseteq X$ can be computed as 
    \begin{gather*}
        \overline{I}=\uni \{ s^{-\infty}\left[\uni{L^{-1}}s^{-\infty}\right]^k I\mid k\in\mathbb{N}\}.
    \end{gather*}
    This means that the specialization preorder `breaks up' into two relations $\leqslant_s$ and $\leqslant_L$, each determined by $s$ and $L$ alone, as explained in what follows. These relations are defined by:
    \begin{alignat*}{3}
        &x\leqslant_s y &&\quad\Leftrightarrow\quad \exists_{m\in\mathbb{N}}[s^m(x)=y] &&(\Leftrightarrow x\in s^{-\infty}\{y\})\\
        &x\leqslant_L y &&\quad\Leftrightarrow\quad \exists_{I}[[x\in I]\wedge[L(I)=y]]\qquad &&(\Leftrightarrow x\in \uni{L^{-1}}\{y\})
    \end{alignat*}
    We then have $x\leqslant y$ if and only if 
    \begin{gather*}
        x=z_0\leqslant_s z_1\leqslant_L z_2\leqslant_s z_3 \leqslant_L\dots z_{2k}\leqslant_s y
    \end{gather*}
    for some $z_0,\dots,z_{2k}\in X$, where $k$ can be any natural number $k\geqslant 0$. Note that $\leqslant_s$ is both reflexive and transitive, although the same cannot be claimed for $\leqslant_L$.
    
    \begin{definition}
    Given a universe $\univ$, a \emph{$\univ$-counting system} is a limit-successor system $(X,L,s)$ satisfying the following conditions:
    \begin{itemize}
    \item[(C1)] The domain of $L$ is the set of all successor-closed subsets $I\in\pow_\univ X$. 
    
    \item[(C2)] If $I$ and $J$ belong to the domain of $L$ and $\overline{I}=\overline{J}$, then $L(I)=L(J)$. 
    \end{itemize}
    \end{definition} 
    
    The structure above is the one that will be used for formulating the universal property of an ordinal system in the next section. In this section we give a characterization of  ordinal systems as counting systems having further internal properties.
    
    
    
    \begin{lemma}\label{lem:sL}
    Let $\univ$ be a universe and let $(X,L,s)$ be a $\univ$-counting system. Then $\leqslant_L$ is transitive and furthermore,
    \begin{gather*}
        x\leqslant_s y\leqslant_L z\quad \Rightarrow\quad x\leqslant_L z
    \end{gather*}
    for all $x,y,z\in X$.
    \end{lemma}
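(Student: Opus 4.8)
The plan is to reduce both assertions to a single construction. To prove $x\leqslant_L z$ it suffices to produce a set $K$ in the domain of $L$ with $x\in K$ and $\overline{K}=\overline{J}$, where $J$ is a witness for the right-hand hypothesis (so $J\in\mathsf{dom}L$ and $L(J)=z$): then (C2) yields $L(K)=L(J)=z$, and since $x\in K$ this is exactly $x\leqslant_L z$. In each case I will obtain the equality $\overline{K}=\overline{J}$ from the sandwich $J\subseteq K\subseteq\overline{J}$, using monotonicity and idempotency of the closure operator.

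For transitivity, I would fix witnesses $I,J\in\mathsf{dom}L$ with $x\in I$, $L(I)=y$, $y\in J$, $L(J)=z$, and set $K=\uni\{I,J\}$. This $K$ is successor-closed (a union of successor-closed sets) and lies in $\pow_\univ X$ by Lemma~\ref{lem:unio}, so $K\in\mathsf{dom}L$ by (C1); and $x\in I\subseteq K$. The only substantive point is $I\subseteq\overline{J}$: since $L(I)=y\in J$ we have $I\in L^{-1}J$, hence $I\subseteq\uni L^{-1}J\subseteq\overline{J}$, because $\overline{J}$ is a closed set and therefore satisfies $\uni L^{-1}\overline{J}\subseteq\overline{J}$. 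Together with $J\subseteq\overline{J}$ this gives $K\subseteq\overline{J}$, so $\overline{K}=\overline{J}$ and (C2) applies.

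For the mixed implication, I would fix $m\in\mathbb{N}$ with $s^m(x)=y$ and a witness $J\in\mathsf{dom}L$ with $y\in J$, $L(J)=z$, and set $K=\uni\{J,\{s^i(x)\mid 0\leqslant i<m\}\}$, a finite extension of $J$. It is successor-closed because the finite chain $x,s(x),\dots,s^{m-1}(x)$ maps forward into itself and finally into $J$ at $s^m(x)=y$; it lies in $\pow_\univ X$ by Lemmas~\ref{lem:fini} and~\ref{lem:unio}; and $x\in K$. To see $K\subseteq\overline{J}$, note $J\subseteq\overline{J}$ and walk backwards along the chain: $s^m(x)=y\in\overline{J}$, and since $\overline{J}$ is closed it satisfies $s^{-1}\overline{J}\subseteq\overline{J}$, so $s^{m-1}(x)\in\overline{J}$, and inductively $s^i(x)\in\overline{J}$ for all $i<m$. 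Thus $\overline{K}=\overline{J}$, and (C2) again gives $L(K)=z$ with $x\in K$.

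The main obstacle worth flagging is membership in $\pow_\univ X$: because a universe need not contain any infinite set, one cannot simply adjoin the entire forward $s$-orbit of $x$ to $J$. Both constructions are designed to avoid this -- in the transitive case $K$ reuses the already-admissible set $I$, and in the mixed case the adjoined orbit segment is finite (of length $m$), so closure under finite unions (Lemma~\ref{lem:unio}) together with Lemma~\ref{lem:fini} suffice. Everything else is a routine verification of successor-closedness and of the closure sandwich.
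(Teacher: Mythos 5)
Your proof is correct and follows essentially the same route as the paper's: in both cases you enlarge the witness $J$ for $L(J)=z$ to a set $K$ containing $x$ (by $\uni\{I,J\}$ in the transitive case and by adjoining the finite orbit segment $x,s(x),\dots,s^{m-1}(x)$ in the mixed case), check $K\in\mathsf{dom}L$ via (C1) and Lemmas~\ref{lem:fini} and~\ref{lem:unio}, establish $\overline{K}=\overline{J}$, and conclude by (C2). Your write-up actually spells out the closure sandwich $J\subseteq K\subseteq\overline{J}$ in more detail than the paper, which simply asserts the equalities of closures.
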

    
    \begin{proof}
    To prove transitivity, suppose $x\leqslant_L y$ and $y\leqslant_L z$. Then $x\in I$, $L(I)=y$, $y\in J$ and $L(J)=z$ for some successor-closed $I,J\in\pow_{\univ}X$. Then the union $\uni \{I,J\}\in\pow_{\univ}X$ (Lemma~\ref{lem:unio}) is also successor-closed, and hence it belongs to the domain of $L$ by (C1).  Since $L(I)\in J$, we get that $I\subseteq\overline{J}$. This implies that $\overline{\uni \{I, J\}}=\overline{J}$. By (C2), $L(\uni \{I, J\})=L(J)$. Having $x\in \uni \{I,J\}$ and  $L(\uni \{I, J\})=z$ means that $x\leqslant_L z$. This completes the proof of transitivity. To prove the second property, suppose $x\leqslant_s y\leqslant_L z$. Then $s^{m}(x)=y$ and $y\in J$ with $L(J)=z$, for some $m\in\mathbb{N}$ and successor-closed $J\in\pow_\univ X$. This proof follows a similar idea where we expand $J$, this time adding to it all elements of the form $s^{k}(x)$, where $k\in\{0,\dots,m-1\}$. The resulting set
    \begin{gather*}
    K= \uni \{\{x,s(x),\dots,s^{m-1}(x)\}, J\}
    \end{gather*}
    is clearly successor-closed and belongs to $\pow_\univ X$ (Lemmas \ref{lem:fini} and \ref{lem:unio}). Then, since $\overline{K}=\overline{J}$, we get that $L(K)=L(J)$ by (C2). This implies $x\leqslant_L z$.  
    \end{proof}
    
    This lemma gives that in a $\univ$-counting system $(X,L,s)$, for any $x,z\in X$ we have
    \begin{gather*}
        x\leqslant z\quad\Leftrightarrow\quad [x\leqslant_s z]\vee  \exists_y[x\leqslant_L y\leqslant_s z]
    \end{gather*}
    and hence the closure of a subset $I\subseteq X$ is given by
    \begin{gather*}
        \overline{I}=\uni \{s^{-\infty} I,\uni{L^{-1}}s^{-\infty} I\}.
    \end{gather*}
    
    \begin{theorem}\label{thm:spo}
    The specialization preorder of a $\univ$-counting system $(X,L,s)$ makes $X$ an ordinal system relative to $\univ$, provided the following conditions hold:
    \begin{itemize}
        \item[(C3)] $s^{-1}\{L(I)\}=\varnothing=I\cap \{s(L(I))\}$ for all $I$ such that $L(I)$ is defined. 
    
        \item[(C4)] $s$ is injective and $L$ has the property that if $L(I)=L(J)$ then $\overline{I}=\overline{J}$.
        
        \item[(C5)] $J=X$ for any successor-closed set $J$ having the property that $I\subseteq J\Rightarrow L(I)\in J$ every time $L(I)$ is defined. 
    \end{itemize}
    When these conditions hold, $s$ is the successor function of the ordinal system and $L(I)=\bigvee I=\bigveeplus I$ whenever $L(I)$ is defined; moreover, the limit ordinals are exactly the elements of $X$ of the form $L(I)$. Furthermore, 
    the closure of $I\in \pow_\univ X$ is given by $	\overline{I}=\left\{\bigveeplus I\right\}\lc
    $.
    Finally, any ordinal system relative to $\univ$ arises this way from a (unique) $\univ$-counting system satisfying (C3--5).
    \end{theorem}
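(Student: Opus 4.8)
The plan is to prove the three assertions in turn: that the specialization order makes $X$ an ordinal system, that $s$ and $L$ then coincide with the successor and incremented-join operations (together with the descriptions of the limit ordinals and of closures), and finally that the construction is invertible. For the first assertion I would invoke Theorem~\ref{thm:wo}: it suffices to check that $\leqslant$ is a \emph{partial} order (a priori it is only the preorder of an Alexandrov topology), that it is a total well-order, that it satisfies (O1${}'$), and that $S\uc\neq\varnothing$ for every $S\in\pow_\univ X$. The engine driving all of this is (C5), which is literally a transfinite-induction scheme: a subset that is successor-closed and closed under forming $L(I)$ for $I$ inside it must be all of $X$.

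The core of the argument --- and the step I expect to be hardest --- is a single induction, via (C5), on principal closures. Let $G$ be the set of those $x\in X$ for which $\overline{\{x\}}=\{y\mid y\leqslant x\}$ is antisymmetric, is well-ordered by $\leqslant$ with largest element $x$, and lies in $\pow_\univ X$. I would show $G$ is successor-closed and satisfies the $L$-closure hypothesis of (C5), so that $G=X$. The successor step uses injectivity of $s$ (C4) together with (C3) and the description of $\leqslant$ recorded after Lemma~\ref{lem:sL} to show $\overline{\{s(x)\}}=\overline{\{x\}}\cup\{s(x)\}$ with $s(x)$ strictly above everything below it; membership in $\pow_\univ X$ then follows from Lemmas~\ref{lem:fini} and~\ref{lem:unio}. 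For the limit step, given $I\subseteq G$ with $L(I)$ defined, one shows $\overline{\{L(I)\}}$ is the down-closure of $I$ with $L(I)$ adjoined on top: (C3) forces $L(I)\notin I$ and $L(I)$ above all of $I$, while (C2) and (C4) pin $L(I)$ down as the least such element, and membership in $\pow_\univ X$ comes from writing the down-closure as $\uni\{\overline{\{y\}}\mid y\in I\}$ and applying Lemma~\ref{lem:unio}. Running alongside this I would prove that any two principal closures are $\subseteq$-comparable (one is an initial segment of the other); this nesting upgrades the local well-orders to a single total well-order and simultaneously yields global antisymmetry. Getting the limit step and the nesting to interlock cleanly is the delicate part.

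With $(X,\leqslant)$ a total well-order in hand, (O1${}'$) is immediate since $\{x\}\lc=\overline{\{x\}}\setminus\{x\}\in\pow_\univ X$ by the bookkeeping above, and $S\uc\neq\varnothing$ for $S\in\pow_\univ X$ follows by producing an explicit upper bound (for instance $s(\max S)$ when $S$ has a maximum, and otherwise $L$ of a successor-closed $\pow_\univ$ extension of the down-closure of $S$, built using Lemma~\ref{lem:unio}); by Theorem~\ref{thm:wo} this makes $X$ an ordinal system. The identifications are then short: $x<s(x)$ with nothing strictly between gives $s(x)=x^+$; for successor-closed $I$ one checks $L(I)\in I\uc$ (using $L(I)\notin I$, which (C3) forces since $I$ is successor-closed) and that $L(I)$ is least in $I\uc$, so $L(I)=\bigveeplus I$, and as such $I$ has no largest element, $\bigveeplus I=\bigvee I$ by Lemma~\ref{lem:max}; the formula $\overline{I}=\{\bigveeplus I\}\lc$ is then just the statement that the down-closure of $I$ is the set of strict predecessors of its incremented join. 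That the limit ordinals are exactly the $L(I)$ follows from (C3) in one direction and from Lemma~\ref{lem:limor} in the other: a limit $x$ has $\{x\}\lc$ successor-closed and in $\pow_\univ X$, whence $x=\bigveeplus\{x\}\lc=L(\{x\}\lc)$.

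For the converse, given an ordinal system $(\absto,\leqslant)$ relative to $\univ$ I would define $s(x)=x^+$ and $L(I)=\bigveeplus I$ on successor-closed $I\in\pow_\univ\absto$. Axiom (C1) holds by (O2); (C3) holds because such an $L(I)$ is a limit ordinal lying strictly above $I$; (C4) is (L6) together with the fact that $\bigveeplus I=\bigveeplus J$ forces equal sets of predecessors; and (C2) follows once one checks that the induced topological closure agrees with the order down-closure, which has the same incremented join as the original set. The key point is (C5), which I would derive from transfinite induction (Theorem~\ref{thm:ind1}): a successor-closed, $L$-closed $J$ satisfies (I1) outright, and (I2) because for a limit $x$ the set $\{x\}\lc$ is a successor-closed $\pow_\univ$ set with $L(\{x\}\lc)=x$ (Lemma~\ref{lem:limor}), so $\{x\}\lc\subseteq J$ gives $x\in J$. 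Finally, the specialization order of the constructed system is recovered as the original $\leqslant$, and uniqueness is forced: by the middle part of the theorem any counting system satisfying (C3--5) and inducing $(\absto,\leqslant)$ must have $s$ equal to the successor and $L(I)$ equal to $\bigveeplus I$.
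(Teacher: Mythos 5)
Your overall architecture is sound and, despite the repackaging, quite close to the paper's: everything is driven by (C5) as an induction scheme, the identifications $s(x)=x^+$, $L(I)=\bigvee I=\bigveeplus I$, $\overline{I}=\{\bigveeplus I\}\lc$ and the characterization of limit ordinals are obtained the same way, and your converse direction (defining $L=\bigveeplus$ on successor-closed members of $\pow_\univ\absto$, checking (C1--4) via the closure formula and deriving (C5) from transfinite induction) is essentially the paper's Step 5. The genuine difference is that you fold antisymmetry, well-ordering and (O1${}'$) into one induction on principal down-sets $\overline{\{x\}}$, where the paper first proves antisymmetry directly from the decomposition of $\leqslant$ into $\leqslant_s$ and $\leqslant_L$ (via the property $s^k(x)\neq x$) and only then runs the (C5) induction for totality.

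The gap is in the limit case of your main induction, precisely the point you flag as ``delicate'' and then dispose of with ``(C2) and (C4) pin $L(I)$ down as the least such element.'' What (C3) and (C4) actually give you is the \emph{identity} $\overline{\{L(I)\}}=\uni\{\overline{I},\{L(I)\}\}$ (since $L(I)$ is not in the image of $s$, anything strictly below $L(I)$ must be $\leqslant_L L(I)$, hence lie in some $J$ with $\overline{J}=\overline{I}$). They do \emph{not} give you comparability of $L(I)$ with an arbitrary $w\in X$: if $w\not\leqslant y$ for every $y\in I$, so that $y<w$ for all $y\in I$, you must still prove $L(I)\leqslant w$, and this is needed both for your nesting claim and later for $L(I)=\bigveeplus I$. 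This statement (the paper's Property 3) cannot be read off from (C2)/(C4); it requires a \emph{second, nested} application of (C5), inducting on $w$. The successor case of that inner induction is easy, but its limit case $w=L(H)$ is the technical heart of the whole theorem: one must show $y\leqslant_L L(H)$ for each $y\in I$ (using the first equality of (C3)), deduce $I\subseteq\overline{H}$ from (C4), and then split according to whether every element of $H$ is dominated by some element of $I$ (in which case $\overline{I}=\overline{H}$ and (C2) gives $L(I)=L(H)$) or some $h\in H$ strictly dominates all of $I$ (in which case the inner inductive hypothesis gives $L(I)<h\leqslant_L L(H)$). Without this double induction your single pass through (C5) does not close, so you should either add the inner induction explicitly or restructure as the paper does. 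A smaller omission: for $S\uc\neq\varnothing$ when $S$ has no maximum, the successor-closure of $S$ lies in $\pow_\univ X$ only after a case split on whether $\univ$ contains an infinite set (if it does not, every member of $\pow_\univ X$ is finite and has a maximum; if it does, one uses Lemmas~\ref{lem:imag} and~\ref{lem:produ} on $S\times\mathbb{N}$ rather than Lemma~\ref{lem:unio} alone).
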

    
Before proving the theorem, let us illustrate axioms (C1--5) in the case when $\univ$ is the universe of hereditarily finite sets (i.e., sets which are elements of a finite transitive set). Then $L$ is only defined on finite successor-closed sets. Injectivity of $s$ in (C4) forces every element $x$ of such set to have the property $\exists_{m\in\mathbb{N}\setminus\{0\}}[s^m(x)=x]$. At the same time, by (C3), such $x$ cannot lie in the image of $L$. So \begin{gather*}J=\{x\in X\mid \forall_{m\in\mathbb{N}\setminus\{0\}}[s^m(x)\neq x]\}\end{gather*} has the second property in (C5). Moreover, by injectivity of $s$ again, $J$ is also successor-closed. Then, by (C5), $J=X$ and so $L$ can only be defined on the empty set. With this provision, the triple $(X,L,s)$ becomes a triple $(X,0,s)$ where $0$ is the unique element in the image of $L$, $0=L(\varnothing)$. The axioms (C1--2) then trivially hold, while (C3--5) take the form of the axioms of Dedekind for a natural number system: 
\begin{itemize}
\item The first equality in (C3) states that $0$ does not belong to the image of $s$, while the second equality holds trivially.

\item (C4) just states that $s$ is injective.

\item (C5) becomes the usual principle of mathematical induction.
\end{itemize}
    
    \begin{proof}[Proof of Theorem~\ref{thm:spo}]
    Suppose the conditions (C1--5) hold. 
    
    \bigskip
    \textit{ Step 1. As a first step, we prove that the specialization preorder is antisymmetric, i.e., that it is a partial order.
    }\bigskip 
    
    For this, we first show that $\leqslant_L$ is `antireflexive': it is impossible to have  $x\leqslant_L x$. Indeed, suppose $x\in I$ and $L(I)=x$. Since $I$ is successor-closed by (C1), $s(x)\in I$. But then $s(x)\in I\cap \{s(L(I))\}$, which is impossible by (C3). 
    
    Next, we show antisymmetry of $\leqslant_s$. Suppose $x\leqslant_s z\leqslant_s x$ and $x\neq z$. Then we get that $s^k(x)=x$ for $k>1$. We will now show that this is not possible. In fact, we establish a slightly stronger property, which will be useful later on as well:
    \begin{description}
    \item[Property 0.] $s^k(x)\neq x$ for all $k>0$ and $x\in X$.
    \end{description}
    Actually, we have already established this property in the remark after the theorem. Here is a more detailed argument. Consider the set $J$ of all $x\in X$ such that $s^k(x)\neq x$ for all $k>0$. We will use (C5) to show that $J=X$. First, we show that $J$ is successor-closed. Let $y\in J$. Suppose $s^k(s(y))=s(y)$ for some $k>0$. Then by injectivity of $s$ (which is required in (C4)), $s^{k-1}(s(y))=y$, which is impossible. So $s(y)\in J$, showing that $J$ is successor-closed. Now let $I\subseteq J$ be successor-closed $I\in\pow_\univ X$ (by (C1), $I$ is such if and only if $L(I)$ is defined). Then $L(I)\neq s^k(L(I))$ for all $k>0$ by the first equality in (C3). So $L(I)\in J$ and we can apply (C5) to get $J=X$, as desired. 
    
    Antisymmetry of $\leqslant_s$ has thus been established.
    
    We are now ready to prove the antisymmetry of $\leqslant$. Suppose $x\leqslant z$ and $z\leqslant x$. There are four cases to consider:
    \begin{description}
        \item[Case 1.] $x\leqslant_s z\leqslant_s x$. Then $x=z$ by antisymmetry of $\leqslant_s$. 
        
        \item[Case 2.] $x\leqslant_L y\leqslant_s z\leqslant_s x$ for some $y$. By transitivity of $\leqslant_s$ and  Lemma~\ref{lem:sL}, in this case we get $y\leqslant_L y$, which we have shown not to be possible.
        
        \item[Case 3.] $x\leqslant_s z\leqslant_L y\leqslant_s x$ for some $y$. Similar to the previous case, in this case we get $y\leqslant_L y$, which is impossible.
        
        \item[Case 4.] $x\leqslant_L y\leqslant_s z\leqslant_L y'\leqslant_s x$ for some $y,y'$. In this case too we get the impossible $y\leqslant_L y$ (this case relies in addition on transitivity of $\leqslant_L$, established also in Lemma~\ref{lem:sL}).
    \end{description}
    
    We have thus shown that the specialization preorder is antisymmetric. We will now establish the following two properties, which will be useful later on.
    \begin{description}
        \item[Property 1.] If $x<y$ then $s(x)\leqslant y$, for all $x,y\in X$.
        
        \item[Property 2.] If $y<s(x)$ then $y\leqslant x$, for all $x,y\in X$.
    \end{description}
    To prove the first property, suppose $x<y$. There are two cases:
    \begin{description}
    \item[Case 1.]  $x\leqslant_s y$; then clearly $s(x)\leqslant y$ (as $x\neq y$).
    
    \item[Case 2.] $x\leqslant_L y'\leqslant_s y$ for some $y'$. Then $x\in I$ and $L(I)=y'$ for a successor-closed $I$, by (C1). So $s(x)\in I$ and thus $s(x)\leqslant_L y'$. With $y'\leqslant_s y$ this gives $s(x)\leqslant y$.
    \end{description}
    So in both cases we get $s(x)\leqslant y$, as required. To prove Property 2, suppose $y<s(x)$. We have again two cases:
    \begin{description}
    \item[Case 1.] $y\leqslant_s s(x)$. This with $y\neq s(x)$ gives $y\leqslant x$ by injectivity of $s$ from (C4).
    
    \item[Case 2.] $y\leqslant_L y'\leqslant_s s(x)$. Since $y'\neq s(x)$ by the first equality in (C3), by injectivity of $s$ from (C4) we must have $y'\leqslant_s x$. This will give $y\leqslant x$.  
    \end{description} We get the required conclusion in both cases.
    
    \bigskip
    \textit{ Step 2.
    Next, we want to prove that the specialization order is a total order.}
    \bigskip
    
    We will prove this by simultaneously establishing the following:
    \begin{description}
    \item[Property 3.]Let $y\in X$. If $x<y$ for all $x\in I$ such that $L(I)$ is defined, then necessarily $L(I)\leqslant y$.
    \end{description}
    Let $J$ be the set of all $x\in X$ such that for every $y\in X$ either $y\leqslant x$ or $x\leqslant y$. We will use (C5) to show that $J=X$. For this, we first prove that $J$ is successor-closed. Let $x\in J$. Consider any $y\in X$. Since $x\leqslant s(x)$, if $y\leqslant x$ then $y\leqslant s(x)$. If $x<y$ then by Property 1, $s(x)\leqslant y$. This proves that $J$ is successor-closed. Consider now $L(I)$, where $I\subseteq J$. To prove that $L(I)\in J$, we proceed as follows. Let $x\in X$. If $x\leqslant y$ for at least one $y\in I$, then $x\leqslant L(I)$, since $y\leqslant_L L(I)$. Thus, it suffices to prove that the set $K_I$ of all $x\in X$ such that if $y< x$ for all $y\in I$ then $L(I)\leqslant x$, is the entire $K_I=X$. This we prove using (C5). First, we show that $K_I$ is successor-closed. Suppose $x\in K_I$. If $y<s(x)$ for all $y\in I$, then by Property 2,  $y\leqslant x$ for all $y\in I$. If $x\in I$ then, since $I$ is successor-closed by (C1), we will have $s(x)\in I$, which will violate the assumption that $y<s(x)$ for all $y\in I$. So we get that $y<x$ for all $y\in I$. Then $L(I)\leqslant x$, since $x\in K_I$. This implies $L(I)\leqslant s(x)$, thus proving that $K_I$ is successor-closed. Now let $H\subseteq K_I$ be such that $L(H) $ is defined. Suppose $y<L(H) $ for all $y\in I$. From the first equality in (C3) we get that $y\leqslant_L L(H) $ for each $y\in I$. So for each $y\in I$, there is $G_y$ such that $y\in G_y$ and $L(G_y)=L(H) $. This implies that $\overline{G_y}= \overline{H}$ for each $y\in I$ (by (C4)), and so $I\subseteq \overline{H}$. Since $I\subseteq J$, each element of $H$ is comparable with each element of $I$. If for every $h\in H$ we have $y_h\in I$ such that $h\leqslant y_h$, then $H\subseteq \overline{I}$. This would then give $\overline{I}=\overline{H}$, and so by (C2), $L(I)=L(H) $, showing that $L(I)\leqslant L(H) $, as desired. In contrast, if there is $h\in H$ such that $y<h$ for every $y\in I$, then (since $H\subseteq K_I$) $L(I)<h$. This together with $h\leqslant_L L(H) $ will give $L(I)\leqslant L(H) $. We have thus shown that $K_I$ has the required properties in order for us to apply (C5) to conclude that $K_I=X$. This then shows that $L(I)\in J$, and so $J$ has the required properties to conclude that $J=X$. The proof of the specialization order being a total order is then complete. At the same time, since $J=X$ and for each $I\subseteq J$ such that $L(I)$ is defined, $K_I=X$, we have also established Property 3. 
    
    \bigskip\textit{ Step 3. We now show that $L(I)=\bigvee I=\bigveeplus I$ whenever $L(I)$ is defined and $s(x)=x^+$ for all $x\in X$.
    }\bigskip 
    
    Properties 0 and 3 show  that $L(I)$ is the join of $I$, for any $I$ such that $L(I)$ is defined. Indeed, if $x\leqslant y$ for all $x\in I$, then for each $x\in I$, also $s(x)\leqslant y$. Since $x<s(x)$, as clearly $x\leqslant s(x)$ and by Property 0, $x\neq s(x)$, we get: $x<y$ for all $x\in I$. Then by Property 3, $L(I)\leqslant y$ thus showing that $L(I)$ is a join of $I$. Furthermore, the property $x<s(x)$ together with Property 1 implies that $s(x)=x^+$, for each $x\in X$. Thus, once we prove that $X$ is an ordinal system under the specialization order, we have that $s$ is its successor function and $L$ is given by join. Furthermore, when $L(I)$ is defined, $I$ is successor-closed and so it cannot have a largest element, by Property 0. Then the join $L(I)$ of $I$ must also be the incremented join of $I$ (Lemma~\ref{lem:max}). 
    
    \bigskip 
    \textit{Step 4. We show that $X$ is an ordinal system under the specialization order where limit ordinals are exactly the elements of the form $L(I)$.}
    \bigskip 
    
    Consider the set $J$ of all $x\in X$ such that $\{x\}\lc \in\pow_\univ X$. If $x\in J$, then by Property 2, $\{s(x)\}\lc =\uni\{\{x\}\lc ,\{x\}\}\in \pow_\univ X$ (Lemmas \ref{lem:fini} and \ref{lem:unio}) and so $s(x)\in J$. Suppose $I\in J$ is such that $L(I)$ is defined. Since the specialization order is a total order and $L(I)$ is the join of elements in $I$, we have
    \begin{gather*}
        \{L(I)\}\lc =\uni \{ \{x\}\lc\mid x\in I\} \in \pow_\univ X \quad\textrm{ ((C1) \& Lemma \ref{lem:unio})}
    \end{gather*}
    and so $L(I)\in J$. By (C5), $J=X$. 
    
    To prove that $X$ is an ordinal system under the specialization order, it remains to prove that for any $Y\in \pow_\univ X$, the incremented join of $Y$ exists in $X$. If $Y$ has a largest element, then the successor of that element is the incremented join of $Y$ (Lemma~\ref{lem:max}). If $\univ$ does not contain an infinite set, then $Y$ is finite and so it has a largest element. Now consider $Y\in\pow_\univ X$ that has no largest element, with $\univ$ containing an infinite set. We define:
    \begin{gather*}
        s^\infty Y = \{ s^n (x)\mid [x\in Y]\wedge[n\in\mathbb{N}]\}.
    \end{gather*}
    This is of course the closure of $Y$ under $s$. Then $s^\infty Y\in\pow_\univ X$ (Lemmas \ref{lem:imag} and \ref{lem:produ}) and so $L(s^\infty Y)$ is defined by (C1). Since $Y\subseteq s^\infty Y$, it holds that $y<L(s^\infty Y)$ for all $y\in Y$. Let $y\in Y$. We prove by induction on $n$ that for each $n\in\mathbb{N}$, we have $s^n(y)<z$ for some $z\in Y$. For $n=0$, this follows from the fact that $Y$ does not have a largest element. Suppose $s^n(y)<z$ for some $z\in Y$. Then $s^{n+1}(y)=s(s^n(y))\leqslant z$. Since $z$ cannot be the largest element of $Y$, we must have $z<z'$ for some $z'\in Y$. Then $s^{n+1}(y)<z'$. What we have shown implies that the incremented join $L(s^\infty Y)$ of $s^\infty Y$ is also the incremented join of $Y$.
    
    We have thus proved that the specialization (pre)order of a $\univ$-counting system satisfying (C3--5) makes it an ordinal system relative to $\univ$, with $s$ as its successor function and $L$ given equivalently by join and by incremented join. This also establishes that if an ordinal system relative to $\univ$ arises this way from a $\univ$-counting system satisfying (C3--5), then this $\univ$-counting system is unique. We now prove the existence of such a $\univ$-counting system. Actually, before doing that, note that by (C3), no element of $X$ of the form $L(I)$ can be a successor ordinal, and so it must be a limit ordinal. Conversely, for a limit ordinal $x$ we have $x=L(\{x\}\lc)$ (Lemma~\ref{lem:limor}). This shows that limit ordinals are precisely the ordinals of the form $L(I)$.
    
    For an ordinal system $\absto$ relative to $\univ$, consider the limit-successor system $(\absto,\bigvee,\_^+)$, where $\bigvee$ is the usual join restricted on a domain as required by (C1). 
    
    \bigskip 
    \textit{Step 5. We show that (C1--5) hold for the limit-successor system $(\absto,\bigvee,\_^+)$ and that the corresponding specialization order matches with the order of $\absto$. In this step we show as well that  $\overline{I}=\left\{\bigveeplus I\right\}\lc$ holds for each $I\in\pow_\univ\absto$.}\bigskip 
    
    By Theorem~\ref{thm:def2}, $L$ is indeed defined over the entire domain required in (C1). To prove (C2), first we establish that
    	\begin{gather*}
    	\overline{I}=\left\{\bigveeplus I\right\}\lc 
    	\end{gather*} 
    	for each $I\in \pow_\univ\absto$. It is easy to see that $\{\bigveeplus I\}\lc $ is closed, so $\overline{I}\subseteq \{\bigveeplus I\}\lc $. To show $\{\bigveeplus I\}\lc\subseteq \overline{I}$, let $x\in \{\bigveeplus I\}\lc$. We have well-ordering and hence total order by Theorem \ref{thm:wel}.  Then $x<\bigveeplus I$ and so $x\leqslant y\in I$ for some $y$. Consider 
    	\begin{gather*}
    	y'=\min\{y\in \overline{I}\mid x
    	\leqslant y\}.
    	\end{gather*}
    	We consider two cases:
    	\begin{description}
    \item[Case 1.] $y'$ is a successor ordinal. Then $y'=y''^+$ for some ordinal $y''$. Since $y'\in \overline{I}$, we must have $y''\in 
    \overline{I}$. Then $y''<x$ and so $y'\leqslant x$ (L3). This gives $x=y'$ and so $x\in \overline{I}$.  
    
    \item[Case 2.] $y'$ is a limit ordinal. Then $\{y'\}\lc $ is successor-closed. Furthermore, we have
    	\begin{gather*}
	y'=\bigveeplus\{y'\}\lc =\bigvee(\{y'\}\lc )^+=\bigvee\{y'\}\lc.
    	\end{gather*} 
    	By closure of $\overline{I}$, we get $\{y'\}\lc \subseteq \overline{I}$.
    	Since $y<x$ for all $y<y'$, we get $y'\leqslant x$. This gives $x=y'$ and so $x\in\overline{I}$.
    \end{description}
    	
    	 We have thus established that the equality $\overline{I}=\{\bigveeplus I\}\lc $ holds
    	for each $I\in\pow_\univ \absto$. From this it follows that the specialization preorder matches with the order of $\absto$. We then get that (C2) holds by the fact that if down-closures of two subsets of a poset are equal, then so are their joins. Thus $(\absto,\bigvee,\_^+)$ is a $\univ$-counting system.
    	
    	It remains to show that (C3--5) hold. Consider a successor-closed  $I\in\pow_\univ\absto$. $I$ has no maximum element thanks to (L1) and thus, $\bigvee I = \bigveeplus I$ by Lemma~\ref{lem:max}. By the same lemma, $\bigveeplus I$ cannot be a successor if $I$ has no maximum. Thus we have $(\_^+)^{-1}\{\bigvee I \} = \varnothing$, which is the first part of (C3). Since $(\bigvee I)^+ > \bigvee I \geqslant x$ for each $x\in I$, we have that $(\bigvee I)^+\notin I$, which means the second part of (C3) also holds, i.e. $I\cap \{(\bigvee I)^+\} = \varnothing$.
    		
    	We already know that $\_^+$ is injective, so to see that (C4) holds, consider another successor-closed $J\in\pow_\univ\absto$. If $\bigvee I = \bigvee J$, then 
    	\begin{gather*}
    	\overline{I} = \{\bigveeplus I \}\lc  = \{\bigvee I \}\lc  = \{\bigvee J \}\lc  = \{\bigveeplus J \}\lc  = \overline{J}.
    	\end{gather*}
    	Thus (C4) holds. Finally, consider a successor-closed subset $J$ of $\absto$ where $\bigvee I\in J$ for all successor-closed subsets $I$ of $J$ such that $I\in\pow_\univ\absto$. Then $J=\absto$ if it satisfies (I1) and (I2) in our formulation of transfinite induction. We check both:
    	\begin{itemize}
    		\item[(I1)] $J^+\subseteq J$ follows from the fact that $J$ is successor-closed.
    		\item[(I2)] Let $x$ be a limit ordinal such that $\{x\}\lc\subseteq J$. Then $x=\bigvee\{x\}\lc \in J$ (Lemma~\ref{lem:limor}).
    	\end{itemize}
    	Since both of these conditions hold, we can conclude that $J=\absto$, and thus (C5) holds. This completes the proof.
    \end{proof}

    \section{The corresponding universal property}
    
    Given two $\univ$-counting systems $(X_1,L_1,s_1)$ and $(X_2,L_2,s_2)$, a function $f\colon X_1\to X_2$ that preserves the successor function ($fs_1=s_2f$) automatically preserves successor-closed subsets, so for any successor-closed $I\in\pow_\univ X_1$, both sides of the equality
    \begin{gather*}
        L_2(fI)=f(L_1(I))
    \end{gather*}
    are defined (Lemma \ref{lem:imag}). When this equality holds for any such $I$, we say that $f$ is a \emph{morphism} of $\univ$-counting systems and represent $f$ as an arrow
    \begin{gather*}
        f\colon (X_1,L_1,s_1)\to (X_2,L_2,s_2).
    \end{gather*}
    It is not difficult to see that $\univ$-counting systems and morphisms between them form a category, under the usual composition of functions. Isomorphisms in this category are bijections between $\univ$-counting systems which preserve both succession and limiting. Call a $\univ$-counting system an \emph{ordinal $\univ$-counting} system when conditions (C3--5) hold. Clearly, the property of being an ordinal $\univ$-counting system is stable under isomorphism of $\univ$-counting systems. By Theorems~\ref{thm:ord} and \ref{thm:spo}, an ordinal $\univ$-counting system exists and is given by the $\univ$-ordinals. We will now see that ordinal $\univ$-counting systems are precisely the initial objects in the category of $\univ$-counting systems.     
    
    \begin{theorem}\label{thm:unive} For any universe $\univ$, a $\univ$-counting system is an initial object in the category of $\univ$-counting systems if and only if it is an ordinal $\univ$-counting system.
    \end{theorem}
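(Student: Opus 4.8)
The plan is to prove the substantive implication — that every ordinal $\univ$-counting system is initial — by an explicit appeal to transfinite recursion, and then to deduce the converse formally from the uniqueness of initial objects. So fix an ordinal $\univ$-counting system $\absto$; by Theorem~\ref{thm:spo} it is an ordinal system relative to $\univ$ whose successor function is $x\mapsto x^+$ and whose limit function is the join $\bigvee$, defined on exactly the successor-closed sets in $\pow_\univ\absto$. Given an arbitrary $\univ$-counting system $(X,L',s')$, I would extend the partial function $L'$ to a total function $V\colon\pow_\univ X\to X$ (choosing arbitrary values off the domain of $L'$) and apply the transfinite recursion theorem with $T=s'$ and this $V$, obtaining the unique $f\colon\absto\to X$ satisfying (R1) $f(x^+)=s'(f(x))$ and (R2) $f(x)=V(\{f(y)\mid y<x\})$ at limit ordinals. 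The first observation to record is that $f$ carries successor-closed sets to successor-closed sets (from (R1)), and that for a limit ordinal $x$ the set $\{f(y)\mid y<x\}=f\{x\}\lc$ is itself successor-closed, since $y<x$ forces $y^+<x$; being also in $\pow_\univ X$ (Lemma~\ref{lem:imag}), it lies in the domain of $L'$ by (C1). Thus (R2) genuinely reads $f(x)=L'(f\{x\}\lc)$, and the arbitrary extension never interferes.

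The heart of the argument is to show that $f$ is a morphism, i.e.\ that $L'(fI)=f(L(I))$ for every successor-closed $I\in\pow_\univ\absto$. My first step here is to prove that $f$ is order-preserving for the specialization preorders, $y\leqslant w\Rightarrow f(y)\leqslant f(w)$, by transfinite induction (Theorem~\ref{thm:ind1}) on $w$: the successor step uses $f(w)\leqslant_s s'(f(w))=f(w^+)$, and the limit step uses that every element of a set in the domain of $L'$ lies below its image under $\leqslant_L$, so $f(y)\leqslant_L f(x)=L'(f\{x\}\lc)$ whenever $y<x$. Crucially, this induction invokes only (R1) and (R2), and so sidesteps the apparent circularity of trying to read order-preservation off the morphism law itself.

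With order-preservation in hand, write $\ell=L(I)=\bigvee I$, a limit ordinal with $\{\ell\}\lc=\overline{I}$ by the closure formula of Theorem~\ref{thm:spo}. Then (R2) gives $f(\ell)=L'(f\overline{I})$, while order-preservation gives $f\overline{I}\subseteq\overline{fI}$; combined with $fI\subseteq f\overline{I}$ this yields $\overline{fI}=\overline{f\overline{I}}$ in $X$, so (C2) in the target forces $L'(fI)=L'(f\overline{I})=f(\ell)$, as required. Uniqueness is then immediate: any morphism $g\colon\absto\to X$ satisfies (R1) directly, and at a limit ordinal $x$ we have $x=L(\{x\}\lc)$ (Lemma~\ref{lem:limor}), so $g(x)=L'(g\{x\}\lc)$, i.e.\ $g$ satisfies (R2); the uniqueness clause of transfinite recursion then gives $g=f$. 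This establishes that every ordinal $\univ$-counting system is initial.

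Finally, the converse is a formal consequence. An ordinal $\univ$-counting system exists (the $\univ$-ordinals, by Theorems~\ref{thm:ord} and~\ref{thm:spo}) and, by the implication just proved, is initial; since initial objects of any category are unique up to isomorphism, any initial $\univ$-counting system is isomorphic to it, and being an ordinal $\univ$-counting system is stable under isomorphism, so every initial object is an ordinal $\univ$-counting system. I expect the main obstacle to be precisely the order-preservation and morphism verification of the second and third paragraphs: one must keep the partial function $L'$ under control, guaranteeing that $V$ is ever evaluated only on successor-closed images, and must prove order-preservation independently of the morphism law to avoid circularity, after which (C2) cleanly bridges $f\overline{I}$ and $fI$.
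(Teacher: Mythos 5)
Your proposal is correct and follows the paper's proof in all essentials: the same reduction of the ``only if'' direction to existence of an ordinal $\univ$-counting system plus isomorphism-invariance and uniqueness of initial objects, the same definition of $f$ by transfinite recursion with uniqueness coming from the recursion theorem, the same identification $f(\bigvee I)=L'(f\overline{I})$ via $\overline{I}=\{\bigvee I\}\lc$, and the same appeal to (C2) once $\overline{f\overline{I}}=\overline{fI}$ is established. The only local difference is that you derive $f\overline{I}\subseteq\overline{fI}$ from monotonicity of $f$ for the specialization preorders, proved by a separate transfinite induction, whereas the paper directly verifies that $f^{-1}\overline{fI}$ is closed; for Alexandrov topologies these are the same fact, and both verifications split into the same successor/limit cases using (R1) and (R2), so this is a cosmetic rather than substantive divergence (your extra care about totalizing $L'$ to a function $V$ before invoking the recursion theorem is a point the paper glosses over).
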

    
    \begin{proof}
        Since we know that an ordinal $\univ$-counting system exists (Theorem \ref{thm:ord}) and that the property of being an ordinal $\univ$-counting system is stable under isomorphism, it suffices to show that any ordinal $\univ$-counting system is an initial object in the category of $\univ$-counting systems. By Theorem~\ref{thm:spo}, an ordinal $\univ$-counting system has the form  $(\absto,\bigvee,\_^+)$, where $\absto$ is an ordinal system relative to $\univ$ and $\bigvee$ is the join defined for exactly the successor-closed subsets $I\in\pow_\univ \absto$ in the $\univ$-counting system. 
        
        For any $\univ$-counting system $(X,L,s)$, if a morphism $(\absto,\bigvee,\_^+)\to (X,L,s)$ exists, it must be the unique function $f$ defined by the transfinite recursion
    	\begin{enumerate}[(i)]
    		\item $f(x^+) = s(f(x))$ for any $x\in\absto$;
    		\item $f(x) = 
    		L(\{f(y)\mid y<x \})$ for any limit ordinal $x$ (Lemma \ref{lem:limor}).
    	\end{enumerate}
    	We now prove that the function $f$ defined by the recursion above is a morphism. It preserves succession by (i). Consider $I\in\pow_\univ \absto$ closed under successors. Then $\bigvee I$ is a limit ordinal and
    	$
    	\overline{I} = \{\bigvee I \}\lc
    	$, by Theorem~\ref{thm:spo}.
    	By definition of $f$, we then have
    	\begin{gather*}
    	f(\bigvee I) = L(\{f(y)\mid y<\bigvee I \}) = L(f\overline{I}).
    	\end{gather*}
    	We will now prove $\overline{f\overline{I}}= \overline{fI}$.
    	We clearly have $fI\subseteq \overline{f\overline{I}}$, so it suffices to show that $f\overline{I}\subseteq \overline{fI}$. This is equivalent to showing $\overline{I}\subseteq f^{-1}\overline{fI}$, which would follow if we prove $f^{-1}\overline{fI}$ is closed. If $x^+\in f^{-1}\overline{fI}$, then $s(f(x))=f(x^+)\in \overline{fI}$. Therefore, $f(x)\in \overline{fI}$ and so $x\in f^{-1}\overline{fI}$. If $\bigvee J\in f^{-1}\overline{fI}$, then (as $\bigvee J$ is a limit ordinal by Theorem \ref{thm:spo})
    	\begin{gather*}
    	L(\{f(y)\mid y<\bigvee J\})=f(\bigvee J)\in \overline{fI},
    	\end{gather*}
    	which implies $\{f(y)\mid y<\bigvee J\}\subseteq \overline{fI}$. This gives $J\subseteq \{\bigvee J\}\lc\subseteq f^{-1}\overline{fI}$. Note that we have the first of these two subset inclusions due to the fact that $\bigvee J=\bigveeplus J$ thanks to Theorem \ref{thm:spo}. This proves that $f^{-1}\overline{fI}$ is closed. So $\overline{f\overline{I}}= \overline{fI}$.  We therefore get $f(\bigvee I) = L(f\overline{I}) = L(fI)$, showing that $f$ is indeed a morphism $(\absto,\bigvee,\_^+) \to (X,L,s)$. 
    \end{proof}

        Consider the case when every element in $\univ\neq \varnothing$ is a finite set (e.g., $\univ$ could be the universe of hereditarily finite sets). Then every triple $(X,0,s)$, where $X$ is a set, $s$ is a function $s\colon X\to X$, and $0\in X$, can be seen as a $\univ$-counting system for the same $s$, with $L(I)=0$ for each finite $I$. A morphism $f\colon (X_1,0_1,s_1)\to (X_2,0_2,s_2)$ between such $\univ$-counting systems is a function $f\colon X_1\to X_2$ such that $s_2f=fs_1$ and $f(0_1)=0_2$. The natural number system $(\mathbb{N},0,s)$, with its usual successor function $s(n)=n+1$, is an initial object in the category of such $\univ$-counting systems. The theorem above presents the natural number system as an initial object in the category of all $\univ$-counting systems. It is not surprising that the natural number system is initial in this larger category too, since the empty set is the only finite successor-closed subset of $\mathbb{N}$.
        


\bibliographystyle{abbrv}
\bibliography{bibliography}

\begin{thebibliography}{10}

\bibitem{bourbaki2006theorie}
M.~Artin, A.~Grothendieck, and J.-L. Verdier.
\newblock Th\'{e}orie des topos et cohomologie etale des sch\'{e}mas:
  S\'{e}minaire de g\'{e}om\'{e}trie alg\'{e}brique {du Bois Marie} ({SGA} 4),
  {Tome} 1.
\newblock 1963--1964.

\bibitem{cantor1895beitrage}
G.~Cantor.
\newblock {Beitr{\"a}ge} zur {Begr{\"u}ndung} der transfiniten
  {Mengenlehre}(erster {Artikel}).
\newblock {\em Math. Ann.}, 46(4):481--512, 1895.

\bibitem{cantor1897beitrage}
G.~Cantor.
\newblock {Beitr{\"a}ge} zur {Begr{\"u}ndung} der transfiniten {Mengenlehre}
  (zweiter {Artikel}).
\newblock {\em Math. Ann.}, 49(2):207--246, 1897.

\bibitem{dedekind1888sollen}
R.~Dedekind.
\newblock {\em Was sind und was sollen die {Zahlen}?}
\newblock Vieweg, Braunschweig, 1888.

\bibitem{fraenkel1953abstract}
A.~H. Fraenkel.
\newblock Abstract set theory.
\newblock {\em Bull. Amer. Math. Soc}, 59:584--585, 1953.

\bibitem{gabriel1953des}
P.~Gabriel.
\newblock Des cat\'egories ab\'eliennes.
\newblock {\em Bull. Soc. Math. France}, 90:323--448, 1962.

\bibitem{jech2003set}
T.~Jech.
\newblock {\em Set theory}.
\newblock Springer, Berlin, Heidelberg, 2003.

\bibitem{joyal1995algebraic}
A.~Joyal and I.~Moerdijk.
\newblock {\em Algebraic set theory}, volume 220.
\newblock Cambridge University Press, 1995.

\bibitem{kunen1980set}
K.~Kunen.
\newblock {\em Set theory: an introduction to independence proofs}.
\newblock Elsevier Science Publishers, 1980.

\bibitem{lawvere1964elementary}
F.~W. Lawvere.
\newblock An elementary theory of the category of sets.
\newblock {\em Proc.\ Nat.\ Acad.\ Sci.\ U.S.A.}, 52(6):1506--1511, 1964.

\bibitem{mac2013categories}
S.~Mac~Lane.
\newblock {\em Categories for the working mathematician}, volume~5 of {\em
  Graduate Texts in Mathematics}.
\newblock Springer, second edition, 1998.

\bibitem{von1923einfuhrung}
J.~von Neumann.
\newblock Zur einf{\"u}hrung der transfiniten {Zahlen}.
\newblock {\em Acta Litterarum ac Scientiarum Regiae Universitatis Hungaricae
  Francisco-Josephinae, sectio scientiarum mathematicarum}, 1:199--208, 1923.

\bibitem{wang1957axiomatization}
H.~Wang.
\newblock The axiomatization of arithmetic.
\newblock {\em J. Symb. Log.}, 22(2):145--158, 1957.

\bibitem{williams1969grothendieck}
N.~H. Williams.
\newblock On {Grothendieck} universes.
\newblock {\em Compos. Math.}, 21(1):1--3, 1969.

\bibitem{zermelo2010ernst}
E.~Zermelo.
\newblock {\em Ernst Zermelo: Collected Works/Gesammelte {Werke}: Volume I/Band
  I -- Set Theory, Miscellanea/{Mengenlehre}, {Varia}}, volume~21 of {\em
  Schriften der Mathematisch-naturwissenschaftlichen {Klasse}}.
\newblock Springer, 2010.

\end{thebibliography}
	










	
\end{document}